\numberwithin{equation}{section}
\newtheorem{thm}{Theorem}[section]
\newtheorem{prop}[thm]{Proposition}
\newtheorem{cor}[thm]{Corollary}
\newtheorem{lem}[thm]{Lemma}
\theoremstyle{definition}
\newtheorem{df}[thm]{Definition}
\newtheorem{ex}[thm]{Example}
\def\gldim{\mathop{{\mathrm{gl.dim}}}\nolimits}
\def\Im{\mathop{\mathrm{Im}}\nolimits}
\def\Hom{\mathop{\mathrm{Hom}}\nolimits}
\def\End{\mathop{\mathrm{End}}\nolimits}
\def\Ext{\mathop{\mathrm{Ext}}\nolimits}
\def\Tor{\mathop{\mathrm{Tor}}\nolimits}
\def\Lten{\mathop{\otimes^{\mathbb L}_\Lambda}\nolimits}
\def\RHom{\mathop{\mathbb R\mathrm{Hom}}\nolimits}
\def\proj{\mathop{\mathrm{proj}}\nolimits}
\def\Mod{\mathop{\mathrm{Mod}}\nolimits}
\def\mod{\mathop{\mathrm{mod}}\nolimits}
\def\add{\mathop{\mathrm{add}}\nolimits}
\def\Gr{\mathop{\mathrm{Mod}^{\mathbb{Z}}}\nolimits}
\def\gr{\mathop{\mathrm{mod}^{\mathbb{Z}}}\nolimits}
\def\Cen{\mathop{\mathrm{Z}}\nolimits}
\newcommand{\bdcat}[2]{\mathcal{D}^{\mathrm{b}}_{#1}(#2)}
\def\op{\mathop{\mathrm{op}}}
\newcommand{\La}{\Lambda}
\title{Higher APR tilting preserves $n$-representation infiniteness}
\author[Mizuno]{Yuya Mizuno}
\address{Y. Mizuno: Graduate School of Mathematics\\ 
 Nagoya University, Chikusa-ku Nagoya 464-8602 Japan}
\email{yuya.mizuno@math.nagoya-u.ac.jp}
\author[Yamaura]{Kota Yamaura}
\address{K. Yamaura: Department of Research Interdisciplinary Graduate School of Medicine and Engineering \\ 
University of Yamanashi, 4-4-37 Takeda Kofu Yamanashi 400-8510 Japan}
\email{kyamaura@yamanashi.ac.jp}
\begin{document}

\maketitle

\begin{abstract}
We show that $m$-APR tilting preserves $n$-representation infiniteness for $1\leq m\leq n$. 
Moreover, we show that these tilting modules lift to tilting modules for the corresponding higher preprojective algebras, which is $(n+1)$-CY algebras. 
We also study the interplay of the two kinds of tilting modules.
\end{abstract}


\section{Introduction}

The notion of Calabi-Yau (CY) triangulated categories has been used in many branch of mathematics. 
In the representation theory of algebras, this already appeared in the work of Auslander on Cohen-Macaulay representations of Gorenstein orders \cite{Au}. 
During the last few years,  the cluster tilting theory in 2-CY categories has been rapidly developed, which provides a categorification of cluster algebras (see an expository paper \cite{Ke3}). 
More generally, it has been recently recognized that $n$-CY categories have several nice properties together with many applications, for instance  \cite{IYo,Ke2,KR1,KR2,IYa}. 
Therefore, from the viewpoint of homological algebras, it is quite natural to study \emph{Calabi-Yau algebras}, which provide CY categories as (subcategories of) derived categories, and, indeed, their constructions have been extensively investigated, for example, by \cite{AIR,BSW,Bro,Boc,G,TV,Vdb}.

One of the aim of this paper is to give a new method to construct a family of CY algebras via tilting modules. 
For this purpose, we apply higher dimensional Auslander-Reiten theory, which was introduced in \cite{I1,I2,I3} and has been developed, for example, in \cite{HI1,HI2,IO1,IO2,HIO}. 
Recently it has turned out that $(n+1)$-CY algebras have a deep connection with the notion of \emph{$n$-representation infinite algebras} and we have the following one-to-one correspondence (see Definition \ref{defCY} and Theorem \ref{corr_RI_CY})
\[
\begin{xy}
(-40,0)*{\left\{ \begin{array}{c} \mbox{$n$-representation} \\ \mbox{infinite algebras}  \end{array} \right\}}="A",
(40,0)*{\left\{ \begin{array}{c}\mbox{bimodule $(n+1)$-Calabi-Yau} \\ \mbox{algebras of G. P. $1$} \end{array} \right\}}="B",,

\ar "A";"B"^{{\tiny \begin{array}{c}  \mbox{$(n+1)$-preprojective} \hspace{10mm} \\  \mbox{algebras}\hspace{10mm} \end{array}}}
\end{xy}
\]

In this paper, through the connection, 
we study tilting modules for $n$-representation infinite algebras and, at the same time, for CY algebras. 
The key tool is the notion of \emph{m-APR tilting} modules (Definition \ref{df_APR}), which is a generalization of classical reflection functors. 
In \cite{IO1,HIO}, it is shown that $n$-APR tilting modules preserve $n$-representation finiteness  ($n$-representation infiniteness) for $n\geq 1$. 
Our first result is that, even for $m$ with $1\leq m\leq n$, $m$-APR tilting modules preserve $n$-representation infiniteness.  
By this fact, we obtain a large family of  $n$-representation infinite algebras. 
Our second result is that these modules lift to tilting modules over the corresponding $(n+1)$-preprojective algebras. 
Moreover, we show that the $(n+1)$-preprojective algebra of an $m$-APR tilted algebra is isomorphic to the endomorphism algebra  of the corresponding tilting module induced by the  $m$-APR tilting module.

\[
\begin{xy}
(-45,0)*{\left\{ \begin{array}{c} \mbox{$n$-representation} \\ \mbox{infinite algebras}  \end{array} \right\}}="A",
(45,0)*{\left\{ \begin{array}{c} \mbox{$n$-representation} \\ \mbox{infinite algebras}  \end{array} \right\}}="B",
(-45,-25)*{\left\{ \begin{array}{c} \mbox{bimodule $(n+1)$-Calabi-Yau} \\ \mbox{algebras of G. P. $1$}  \end{array} \right\}}="C",
(45,-25)*{\left\{ \begin{array}{c} \mbox{bimodule $(n+1)$-Calabi-Yau} \\ \mbox{algebras of G. P. $1$}  \end{array} \right\}}="D",

\ar "A";"B"^{{\tiny \begin{array}{c} \mbox{higher APR tilting}  \\ \mbox{(Theorem \ref{main_thm1})} \end{array}}}
\ar "A";"C"_{{\tiny \begin{array}{c} \mbox{$(n+1)$-preprojective} \\ \mbox{algebras} \end{array}}}
\ar "B";"D"^{{\tiny \begin{array}{c} \mbox{$(n+1)$-preprojective} \\ \mbox{algebras} \end{array}}}
\ar "C";"D"_{{\tiny \begin{array}{c} \mbox{corresponding tilting} \\ \mbox{(Theorem \ref{main_thm2})}  \end{array}}}
\end{xy}
\]

This also implies that we obtain a family of $(n+1)$-CY algebras, which are derived equivalent to each other. As an application, we prove that $n$-representation tameness is also preserved under $m$-APR tilting (Corollary \ref{pres_rep_tame}).

\bigskip

\noindent
\textbf{Notations.}
Let $K$ be an algebraically closed field. 
We denote by $D:=\Hom_K(-,K)$ the $K$-dual. 
An algebra means a $K$-algebra which is indecomposable as a ring. 
For an algebra $\Lambda$, we denote by $\Mod\Lambda$ the category of right $\Lambda$-modules and 
by $\mod\Lambda$ the category of finitely generated $\Lambda$-modules. If $\La$ is $\mathbb{Z}$-graded, 
we denote by $\Gr\Lambda$ the category of  $\mathbb{Z}$-graded ${\Lambda}$-modules and by 
$\gr {\Lambda}$ the category of finitely generated $\mathbb{Z}$-graded ${\Lambda}$-modules.

\bigskip
\noindent
\textbf{Acknowledgements.} 
The authors thank Osamu Iyama for his support and kind advice.
They are grateful to Hiroyuki Minamoto for useful discussion. 
Part of this work was done while they visited Trondheim. They thank all the people at NTNU for their hospitality.
The first author thanks the Institute Mittag-Leffler for the support and warm hospitality during the preparation of this paper. 
The first author is supported by JSPS Grant-in-Aid for Scientific Research 26800009
and the second author is supported by JSPS Grant-in-Aid for Scientific Research 26800007.

\section{Preliminaries}

In this section, we recall basic notions and notations appearing in this paper.

\subsection{$n$-representation infinite algebras}

Let $\Lambda$ be a finite dimensional algebra of global dimension at most $n$.  
We let $\bdcat{}{\Lambda}:=\mathcal{D}^{\mathrm{b}}(\mod\La)$ and denote the Nakayama functor by 
 \[
\nu:=- \Lten D\Lambda\simeq D\RHom_\La(-,\La): \bdcat{}{\Lambda} \longrightarrow \bdcat{}{\Lambda}.
\]
Then $\nu$ gives a Serre functor,  i.e. there exists a functorial isomorphism 
\[
\Hom_{\bdcat{}{\Lambda}}(X,Y) \simeq D\Hom_{\bdcat{}{\Lambda}}(Y,\nu X)
\]
for any $X, Y \in\mathcal{D}^{\mathrm{b}}(\Lambda)$. 
A quasi-inverse of $\nu$ is given by
\[
\nu^-:=\RHom_\La(D\La,-)\simeq - \Lten \RHom_{\Lambda}(D\Lambda,\Lambda):\bdcat{}{\Lambda} \longrightarrow \bdcat{}{\Lambda}.
\]

We let 
\[
\nu_n:=\nu \circ [-n] \ \textnormal{and}\ \nu_n^-:=\nu^-\circ [n].
\]

By uniqueness of the Serre functor, we have the following property. 

\begin{lem}\label{Serre-com}
Let $\Gamma$ be a finite dimensional algebra.  If there is a triangle-equivalence $F:\bdcat{}{\Lambda} \to \bdcat{}{\Gamma}$, 
 then the following diagram commutes up to isomorphisms of functors.  
\[
\xymatrix{
\bdcat{}{\Lambda} \ar[r]^{\nu_n} \ar[d]_{F} & \bdcat{}{\Lambda}  \ar[d]^{F} \\
\bdcat{}{\Gamma} \ar[r]_{\nu_n} & \bdcat{}{\Gamma}. 
}
\]

\end{lem}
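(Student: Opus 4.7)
The plan is to reduce the claim to the uniqueness of the Serre functor up to isomorphism of triangle functors. Recall that on $\bdcat{}{\Lambda}$ the functor $\nu$ is characterised (up to natural isomorphism) by the existence of a bifunctorial isomorphism $\Hom(X,Y)\simeq D\Hom(Y,\nu X)$. So I would prove the diagram commutes in two steps: first that $F\circ\nu\simeq \nu\circ F$, and then observe that the shift $[-n]$ commutes with every triangle functor, whence $F\circ\nu_n=F\circ\nu\circ[-n]\simeq \nu\circ F\circ[-n]\simeq\nu\circ[-n]\circ F=\nu_n\circ F$.

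For the first step, I would transport the Serre duality on $\bdcat{}{\Lambda}$ along the equivalence $F$. Pick a quasi-inverse $F^{-1}$ of $F$. For $X,Y\in\bdcat{}{\Gamma}$, compute
\[
\Hom_{\bdcat{}{\Gamma}}(X,Y)\simeq\Hom_{\bdcat{}{\Lambda}}(F^{-1}X,F^{-1}Y)\simeq D\Hom_{\bdcat{}{\Lambda}}(F^{-1}Y,\nu F^{-1}X)\simeq D\Hom_{\bdcat{}{\Gamma}}(Y,F\nu F^{-1}X),
\]
where the first and third isomorphisms use that $F$ (resp.\ $F^{-1}$) is fully faithful, and the middle one is the Serre duality on $\bdcat{}{\Lambda}$. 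Hence $F\circ\nu\circ F^{-1}$ is a Serre functor on $\bdcat{}{\Gamma}$. By uniqueness of Serre functors up to natural isomorphism, $F\circ\nu\circ F^{-1}\simeq \nu$, which after composing with $F$ on the right gives $F\circ\nu\simeq \nu\circ F$.

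The only mildly delicate point is that uniqueness of the Serre functor is normally stated up to natural isomorphism of additive functors; since both $F\nu F^{-1}$ and $\nu$ are triangulated, the natural isomorphism can be chosen compatibly with the triangulated structure (this is standard, cf.\ Bondal--Kapranov). Given that, the composition argument in the first paragraph concludes the proof, and no further calculation is required.
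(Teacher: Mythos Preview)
Your proof is correct and follows exactly the approach the paper indicates: the paper does not give a detailed proof but simply prefaces the lemma with ``By uniqueness of the Serre functor, we have the following property,'' and your argument spells out precisely this standard reduction. Nothing further is needed.
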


Next we define the \emph{$n$-Auslander-Reiten translations} 
\[
\tau_n=D\Ext_\La^n(-,\La):\mod\Lambda \to \mod\Lambda \ \textnormal{and}\ 
\tau_n^-=\Ext_\La^n(D\La,-)\simeq-\otimes_\La\Ext_\La^n(D\La,\La) :\mod\Lambda \to \mod\Lambda.
\]
These functors play an important role in higher dimensional Auslander-Reiten theory \cite{I1,I2,I3}. 
Note that, since $\gldim\La\leq n$, we have  
$
\tau_n=\mathrm{H}^0(\nu_n-)$ and $\tau_n^-=\mathrm{H}^0(\nu_n^--).
$

We consider the following full subcategories
\[
\mathscr{N}_\Lambda^- := \mathrm{add}\{ X \in \mathrm{mod}\Lambda \ | \ \mbox{$\nu^{-i}_n(X) \in \mathrm{mod}\Lambda$ for any $i \geq 0$} \},\]
\[\mathscr{N}_\Lambda^+ := \mathrm{add}\{ X \in \mathrm{mod}\Lambda \ | \ \mbox{$\nu^{i}_n(X) \in \mathrm{mod}\Lambda$ for any $i \geq 0$} \}\]
of $\mathrm{mod}\Lambda$. 
We often use the fact that $X \in \mathscr{N}_\Lambda^-$ if and only if $\mathrm{H}^j(\nu^{-i}_n(X))=0$ for any $j\neq 0$ and $i\geq0$. 

Now we recall the definition of $n$-representation infinite algebras as follows.

\begin{df} \cite{HIO}
A finite dimensional algebra $\Lambda$ is called \emph{$n$-representation infinite} if it satisfies $\gldim \Lambda \leq n$ and $\Lambda \in \mathscr{N}_\Lambda^-$, which is equivalent to say that $\gldim \Lambda \leq n$ and $D\Lambda \in \mathscr{N}^+_\Lambda$.
\end{df}

We remark that 1-representation infinite algebras are nothing but  
hereditary representation infinite algebras. 
Thus $n$-representation infinite algebras can be regarded as a generalization of them.

Moreover, by the definition, we have the following property, which will be used later.

\begin{lem}\label{lem0}
Let $\Lambda$ be an $n$-representation infinite algebra.  
Let $X \in \mathscr{N}_\Lambda^-$ and $Y \in \mathscr{N}^+_\Lambda$.
Then there exist isomorphisms $\nu^{-i}_n(X) \simeq\tau^{-i}_n(X) $ and $\nu_n^{i}(Y) \simeq\tau_n^{i}(Y) $ for any $i\geq0$.
\end{lem}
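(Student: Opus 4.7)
The plan is to prove $\nu_n^{-i}(X) \simeq \tau_n^{-i}(X)$ for $X \in \mathscr{N}_\Lambda^-$ by induction on $i \geq 0$; the dual statement for $Y \in \mathscr{N}_\Lambda^+$ will follow by the completely symmetric argument with $\nu_n$ in place of $\nu_n^-$. The inputs I use are the identity $\tau_n^- = \mathrm{H}^0(\nu_n^-(-))$ recorded just above, together with the observation from the preceding remark that $X \in \mathscr{N}_\Lambda^-$ is equivalent to $\mathrm{H}^j(\nu_n^{-i}(X)) = 0$ for every $j \neq 0$ and $i \geq 0$, so that each $\nu_n^{-i}(X)$ is concentrated in cohomological degree zero.

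I would begin by recording the following stability property: if $X \in \mathscr{N}_\Lambda^-$, then so is $\nu_n^-(X)$, because $\nu_n^{-i}(\nu_n^-(X)) = \nu_n^{-(i+1)}(X) \in \mod\Lambda$ for every $i \geq 0$. This is what makes the induction go through, since at each stage the object to which $\nu_n^-$ is applied once more still satisfies the relevant vanishing.

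The base case $i = 0$ is trivial. For the inductive step, assume $\nu_n^{-i}(X) \simeq \tau_n^{-i}(X)$ in $\mod\Lambda$. Then
\[
\tau_n^{-(i+1)}(X) = \tau_n^-(\tau_n^{-i}(X)) \simeq \tau_n^-(\nu_n^{-i}(X)) = \mathrm{H}^0(\nu_n^-(\nu_n^{-i}(X))) = \mathrm{H}^0(\nu_n^{-(i+1)}(X)) \simeq \nu_n^{-(i+1)}(X),
\]
where the last isomorphism uses that $\nu_n^{-(i+1)}(X) \in \mod\Lambda$ by the hypothesis $X \in \mathscr{N}_\Lambda^-$.

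The only subtle point — in fact the sole reason the lemma needs an argument — is that $\tau_n^-$ is defined as the composite $\mathrm{H}^0 \circ \nu_n^-$ on $\mod\Lambda$, so a priori the iterate $(\tau_n^-)^i$ is not literally $\mathrm{H}^0 \circ (\nu_n^-)^i$. Membership in $\mathscr{N}_\Lambda^-$ is exactly what makes each intermediate $\mathrm{H}^0$ an identity, and once the stability observation above is in hand the induction is entirely formal. No real obstacle arises.
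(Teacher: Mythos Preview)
Your proof is correct and is precisely the natural elaboration of what the paper has in mind: the paper gives no explicit proof for this lemma, stating only that it holds ``by the definition,'' and your induction is exactly how one unpacks that claim. One small remark: the hypothesis that $\Lambda$ is $n$-representation infinite is not actually used in your argument (nor is it needed for the statement), since the conclusion follows for any $X\in\mathscr{N}_\Lambda^-$ over any $\Lambda$ with $\gldim\Lambda\le n$; the hypothesis is just the ambient context of the paper.
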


\subsection{$m$-APR tilting modules}

Let $\Lambda$ be an algebra. 
A $\Lambda$-module $T$ is called \emph{tilting} if it satisfies the following conditions.
\begin{enumerate}
\def\labelenumi{(\theenumi)} 
\item[(T1)] There exists an exact sequence
\[
0 \to P_m \to \cdots\cdots P_1 \to P_0 \to T \to 0
\]
where each $P_i$ is a finitely generated projective $\Lambda$-module. 
\item[(T2)] $\Ext_{\Lambda}^i(T,T)=0$ for any $i>0$.
\item[(T3)] 
There exists an exact sequence 
\[
0 \to \Lambda \to T_{0} \to T_1 \to \cdots\cdot \to T_m \to 0
\]
where each $T_i$ belongs to $\add T$. 
\end{enumerate}
In this case, there exists a triangle-equivalence between $\bdcat{}{\Lambda}$
and $\bdcat{}{\End_{\Lambda}(T)}$ \cite{H,Ric}.

The following tilting modules play a central role in this paper.

\begin{df} \label{df_APR}
Let $\Lambda$ be a finite dimensional algebra of global dimension at most $n$. 
We assume that there is a simple projective $\Lambda$-module $S$ satisfying $\Ext^i_{\Lambda}(D\Lambda,S)=0$ for any $0 \leq i < n$. Take a direct sum  decomposition $\Lambda=S \oplus Q$ as a $\Lambda$-module. 
In \cite[Proposition 3.2]{IO1} (and its proof), it was shown that there exists a minimal projective resolution 
\begin{eqnarray*}
0 \to S \xrightarrow{a_0} P_1\xrightarrow{a_1} \cdots \cdots \xrightarrow{a_{n-1}} P_{n}\xrightarrow{a_n} \tau_n^-(S) \to 0 
\end{eqnarray*}
of $\tau^-_n(S)$ such that each $P_i$ belongs to $\add Q$.
Let $K_m:=\Im a_m$ for $0 \leq m \leq n$. 
Note that $K_0=S$ and $K_n=\tau_n^-(S)$.
Then it was shown that  $Q \oplus K_m$ is a tilting module with projective dimension $m$. Following \cite{IO1}, we call it the \emph{$m$-APR} (=Auslander-Platzeck-Reiten) tilting module with respect to $S$.  
\end{df}

If $\Lambda$ is an $n$-representation infinite algebra with a simple projective module $S$, then  the above condition $\Ext^i_{\Lambda}(D\Lambda,S)=0$  ($0 \leq i <n$) is automatically satisfied.
Thus any simple projective module gives an $m$-APR tilting module for $n$-representation infinite algebras.

\subsection{$(n+1)$-preprojective algebras}
Next we recall the definition of $(n+1)$-preprojective algebras and their properties. 
In the case of $n=1$, the algebras coincide with the (classical) preprojective algebras \cite{GP,BGL,Rin}.

\begin{df}\label{df_pp}\cite{IO2}
Let $\La$ be a finite dimensional algebra. 
The \emph{$(n+1)$-preprojective algebra} $\widehat{\Lambda}$ for  $\Lambda$ is a tensor algebra
\[
\widehat{\Lambda} := T_{\Lambda}(\Ext^n_{\Lambda}(D\Lambda,\Lambda))
\]
of $\Lambda^{\op}\otimes_K \Lambda$-module $\Ext^n_{\Lambda}(D\Lambda,\Lambda)$.
This algebra can be regarded as a positively graded algebra by
\[
\widehat{\Lambda}_i= \Ext^n_{\Lambda}(D\Lambda,\Lambda)^{\otimes_{\Lambda}^i}
= \overset{i}{\overbrace{\Ext^n_{\Lambda}(D\Lambda,\Lambda) \otimes_{\Lambda} \cdots \cdots \otimes_{\Lambda}\Ext^n_{\Lambda}(D\Lambda,\Lambda)}}.
\]
\end{df}

We remark that the $(n+1)$-preprojective algebra is the 0-th homology 
of Keller's \emph{derived $(n+1)$-preprojective DG algebra} \cite{Ke2}.

Moreover, we recall the following definition, which is a graded analog of 
Ginzburg's Calabi-Yau algebras \cite{G}.

\begin{df}\label{defCY}
Let $A=\bigoplus_{i \geq 0}A_i$ be a positively graded algebra such that $\dim_K A_i < \infty$ for any $i \geq 0$.
We denote by $A^e:=A^{\op} \otimes_KA$. 
We call $A$ \emph{bimodule $n$-Calabi-Yau of Gorenstein parameter $1$} if it satisfies the following conditions.
\begin{enumerate}
\def\labelenumi{(\theenumi)} 
\item $A \in \mathcal{K}^{\mathrm b}(\proj^{\mathbb{Z}} A^e)$.
\item $\RHom_{A^e}(A,A^e)[n](-1) \simeq A$ in $\mathcal{D}(\Gr A^e)$.
\end{enumerate}
\end{df}

Then $n$-representation infinite algebras and bimodule CY algebras have a close relationship as follows (see \cite[Theorem 4.35]{HIO}).

\begin{thm}\cite{AIR,Ke2,MM}\label{corr_RI_CY}
There is a one-to-one correspondence between
isomorphism classes of $n$-representation infinite algebras $\Lambda$ and
isomorphism classes of graded bimodule $(n+1)$-CY algebras $A$ of Gorenstein parameter $1$. The correspondence is given by
\[
\Lambda\mapsto \widehat{\Lambda}\ \ {and}\ 
\  A\mapsto A_0.
\]
\end{thm}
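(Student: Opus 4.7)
The plan is to set up the two maps separately and then show they are mutually inverse. The central tool is Keller's derived $(n+1)$-preprojective DG algebra $\Pi_{n+1}(\Lambda)$ from \cite{Ke2}, which is always bimodule $(n+1)$-Calabi-Yau by construction. For an $n$-representation infinite $\Lambda$, I would show that this DG algebra has cohomology concentrated in degree zero and coincides with $\widehat{\Lambda}$, so that the Calabi-Yau property of $\Pi_{n+1}(\Lambda)$ transfers directly to $\widehat{\Lambda}$ at the level of ordinary algebras.

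For the forward direction $\Lambda \mapsto \widehat{\Lambda}$: since $\Lambda \in \mathscr{N}_\Lambda^-$, Lemma \ref{lem0} gives $\tau_n^{-i}(\Lambda) \simeq \nu_n^{-i}(\Lambda)$ for every $i \geq 0$, and each of these is a module concentrated in cohomological degree zero. Iterating the isomorphism $\tau_n^- \simeq -\otimes_\Lambda \Ext^n_\Lambda(D\Lambda,\Lambda)$ on the regular module $\Lambda$ yields $\widehat{\Lambda}_i \simeq \tau_n^{-i}(\Lambda)$, so $\widehat{\Lambda} = \bigoplus_{i\geq 0} \tau_n^{-i}(\Lambda)$ as a positively graded algebra with finite-dimensional pieces. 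The Gorenstein parameter $1$ is visible because the bimodule $\Ext^n_\Lambda(D\Lambda,\Lambda)$ generating $\widehat{\Lambda}$ sits in tensor degree $1$ and shifts cohomological degree by $n$; matching these shifts produces exactly $\RHom_{\widehat{\Lambda}^e}(\widehat{\Lambda},\widehat{\Lambda}^e)[n+1](-1)\simeq \widehat{\Lambda}$.

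For the reverse direction $A \mapsto A_0$: from the CY bimodule resolution of $A$ and the hypothesis that the Gorenstein parameter equals $1$, one extracts a canonical bounded bimodule resolution of $A_0 = A/A_{\geq 1}$ of length $n+1$ whose terms restrict to projective $A_0$-bimodules in the appropriate degrees, forcing $\gldim A_0 \leq n$. The same resolution identifies $\Ext^n_{A_0}(DA_0, A_0)$ with $A_1$ as an $A_0$-bimodule, and the multiplication maps $A_1^{\otimes^i_{A_0}} \to A_i$ in $A$ then furnish the isomorphism $\widehat{A_0} \simeq A$ needed for the mutual inverse. Finally, $A_0 \in \mathscr{N}_{A_0}^-$ follows because iterating $\tau_n^-$ on $A_0$ recovers $A_i$, which is finite-dimensional by hypothesis and lives in cohomological degree zero.

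The main obstacle is the transfer of the Calabi-Yau condition between the module and bimodule levels. The iterative description $\widehat{\Lambda}_i \simeq \tau_n^{-i}(\Lambda)$ is a one-sided module statement, while the CY property is intrinsically a bimodule statement involving $\RHom_{A^e}(A,A^e)$. Bridging the two requires constructing a genuine bimodule resolution of $\widehat{\Lambda}$ of length $n+1$ whose $A^e$-dual computes $\widehat{\Lambda}$ back, and this is precisely where the DG-lift $\Pi_{n+1}(\Lambda)$, together with the vanishing of its higher cohomologies that $n$-representation infiniteness supplies, does the essential work.
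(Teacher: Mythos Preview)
The paper does not supply its own proof of this theorem: it is stated in the preliminaries with citations \cite{AIR,Ke2,MM} (and a pointer to \cite[Theorem 4.35]{HIO}) and then used as a black box. There is therefore no argument in the paper against which to compare your proposal.

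That said, your sketch is a faithful outline of the strategy in those references. The key steps are exactly as you identify them: Keller's derived $(n+1)$-preprojective DG algebra $\Pi_{n+1}(\Lambda)$ is bimodule $(n+1)$-CY by construction \cite{Ke2}, and the condition $\Lambda\in\mathscr{N}_\Lambda^-$ is precisely what forces its cohomology to be concentrated in degree zero, so that $\Pi_{n+1}(\Lambda)$ is quasi-isomorphic to the ordinary algebra $\widehat{\Lambda}$; the Gorenstein parameter computation and the converse direction (extracting a length-$(n+1)$ bimodule resolution of $A_0$ from the graded CY data) are carried out in \cite{AIR,MM}. Your diagnosis of the main obstacle---that the module-level description $\widehat{\Lambda}_i\simeq\tau_n^{-i}(\Lambda)$ does not by itself yield the bimodule statement $\RHom_{\widehat{\Lambda}^e}(\widehat{\Lambda},\widehat{\Lambda}^e)[n+1](-1)\simeq\widehat{\Lambda}$, and that the DG lift is what bridges this---is accurate and is the substantive content of the cited work. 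If you intend to write this up as an independent proof rather than a citation, the places requiring genuine care are the formality of $\Pi_{n+1}(\Lambda)$ and, in the reverse direction, verifying that the multiplication maps $A_1^{\otimes_{A_0}^i}\to A_i$ are isomorphisms; both are nontrivial and are handled explicitly in the references.
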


We remark that the geometric rolled-up helix algebra on a smooth projective Fano variety is given as the $(n+1)$-preprojective algebra of an $n$-representation infinite algebra (see \cite{BS,TV} and Example \ref{example} (2)).

The following result implies that bimodule $n$-CY algebras provide $n$-CY triangulated categories.

\begin{thm}\label{CY_condition}\cite[Lemma 4.1]{Ke1}\cite[Proposition 3.2.4]{G}
Let $A$ be a bimodule $n$-CY algebra. 
Then there exists a functorial isomorphism 
\[
\Hom_{\mathcal{D}(\Mod A)}(M,N) \simeq D\Hom_{\mathcal{D}(\Mod A)}(N,M[n])
\]
for any $N \in \mathcal{D}(\Mod A)$ whose total homology is finite dimensional and any $M \in \mathcal{D}(\Mod A)$.
\end{thm}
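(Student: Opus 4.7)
The plan is to derive the Serre-duality isomorphism directly from the bimodule Calabi--Yau condition, which is precisely the abstract assertion that $[n]$ is a Serre functor on a suitable subcategory of $\mathcal{D}(\Mod A)$. Writing $\omega_A := \RHom_{A^e}(A,A^e)$ for the dualizing bimodule, the CY hypothesis (the Gorenstein-parameter twist being irrelevant for ungraded Hom-spaces) reads $\omega_A \simeq A[-n]$ in $\mathcal{D}(A^e)$.

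\textbf{Step 1 (dualizing bimodule computation).} For any $M,N\in \mathcal{D}(\Mod A)$ I would start from the canonical chain of isomorphisms
\[
\RHom_A(M,N)\ \simeq\ \RHom_A(M,\,A\otimes_A^{\mathbb{L}} N)\ \simeq\ \RHom_A(M,\,\omega_A\otimes_A^{\mathbb{L}} N)[n],
\]
where the second step inserts the CY identification. Note that at this point no finiteness on $N$ has yet been used.

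\textbf{Step 2 (identifying with $D\RHom$).} The next step is to recognize $\RHom_A(M,\omega_A\otimes_A^{\mathbb{L}} N)$ as $D\RHom_A(N,M)$. This is a bimodule manipulation combining the very definition of $\omega_A$ as $\RHom_{A^e}(A,A^e)$ with standard tensor--Hom adjunctions; the finite-dimensionality of the total homology of $N$ is the hypothesis that makes this identification valid, since it is what allows one to pass from abstract bimodule duals to the honest $K$-linear duality $D=\Hom_K(-,K)$. Combined with Step 1 and $H^0$, this yields
\[
\Hom_{\mathcal{D}(\Mod A)}(M,N)\ \simeq\ D\Hom_{\mathcal{D}(\Mod A)}(N,\,M[n]).
\]
Functoriality in both variables is inherited from the naturality of the adjunctions and of the CY isomorphism itself.

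\textbf{Main obstacle.} The principal technical difficulty is Step 2: the careful bimodule bookkeeping needed to identify $\RHom_A(M,\omega_A\otimes_A^{\mathbb{L}} N)$ with $D\RHom_A(N,M)$, and the precise argument that finite dimensionality of the total homology of $N$ is exactly what makes this identification work. A robust way to deal with it is to reduce by dévissage (via the truncation triangles of $N$, which is bounded with finite-dimensional cohomologies) to the case where $N$ is a simple $A$-module, in which case $\omega_A\otimes_A^{\mathbb{L}} N$ can be computed explicitly using a bounded bimodule resolution of $A$ by finitely generated projective $A^e$-modules, furnished by the smoothness condition $A\in\mathcal{K}^{\mathrm{b}}(\proj^{\mathbb{Z}} A^e)$ built into Definition \ref{defCY}.
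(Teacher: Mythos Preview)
The paper does not supply its own proof of this statement: Theorem~\ref{CY_condition} sits in the preliminaries section and is quoted directly from \cite[Lemma 4.1]{Ke1} and \cite[Proposition 3.2.4]{G}, then used as a black box in the proof of Theorem~\ref{main_thm2}~(1) and Proposition~\ref{n-pre-iso}. So there is no in-paper argument to compare your proposal against.

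For what it is worth, your outline is in the spirit of the cited proofs, but note that as written the weight distribution is lopsided: your Step~1 is a tautology (it merely inserts $\omega_A\simeq A[-n]$ and immediately removes it), so the entire argument is Step~2, which you defer. In Keller's and Ginzburg's treatment the actual mechanism is not a direct identification of $\RHom_A(M,\omega_A\otimes^{\mathbb L}_A N)$ with $D\RHom_A(N,M)$, but rather the pair of identities
\[
\RHom_A(N,M)\ \simeq\ \RHom_{A^e}\bigl(A,\Hom_K(N,M)\bigr)\ \simeq\ \omega_A\otimes^{\mathbb L}_{A^e}\Hom_K(N,M),
\]
the first being the elementary observation that $A$-linear maps are the $A^e$-invariants of the bimodule $\Hom_K(N,M)$, and the second using perfectness of $A$ over $A^e$ (the smoothness hypothesis in Definition~\ref{defCY}). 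The finite-dimensionality of $N$ is then used to rewrite $\Hom_K(N,M)$ as $M\otimes_K DN$ and unwind to $D\RHom_A(M,N)$; finally one substitutes $\omega_A\simeq A[-n]$. Your d\'evissage-to-simples plan would work, but is heavier than needed: once a bounded resolution of $A$ by finitely generated projective $A^e$-modules is in hand, the identifications go through for all $N$ with finite-dimensional total homology at once.
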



\section{Main result}

Throughout this section, 
let $\Lambda$ be an $n$-representation infinite algebra. Assume that there exists a simple projective $\Lambda$-module $S$ and take a direct sum decomposition $\La=S\oplus Q$ as a $\La$-module. 
As Definition \ref{df_APR}, 
we have a minimal projective resolution 
\begin{eqnarray}
0 \to S \xrightarrow{a_0} P_1\xrightarrow{a_1} \cdots \cdots \xrightarrow{a_{n-1}} P_{n}\xrightarrow{a_n} \tau_n^-(S) \to 0 \label{ast}
\end{eqnarray}
of $\tau^-_n(S)$ such that each $P_i$ belongs to $\add Q$. 
Let $K_i:=\Im a_i$ and we fix $m$ with  $0 \leq m \leq n$. 
Then we denote, respectively, the $m$-APR tilting $\Lambda$-module and the endomorphism algebra by 
\begin{eqnarray}\label{notation}
T:=Q\oplus K_m\ \ \ \ \ \ \  \mbox{and}\ \ \ \ \ \ \  \Gamma:=\End_{\Lambda}(T).
\end{eqnarray}

\subsection{$m$-APR tilting modules preserve $n$-representation infiniteness}

The aim of this subsection is to show the following result, which is a generalization of \cite[Theorem 2.13]{HIO}.

\begin{thm}\label{main_thm1}
Under the above setting, the algebra $\Gamma$ is $n$-representation infinite.
\end{thm}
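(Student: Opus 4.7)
The plan is to exploit the derived equivalence $F := \RHom_\Lambda(T,-): \bdcat{}{\Lambda} \xrightarrow{\sim} \bdcat{}{\Gamma}$ induced by the tilting module $T$ (so that $F(T) \simeq \Gamma$), together with the compatibility of Serre functors $F \circ \nu_n \simeq \nu_n \circ F$ from Lemma~\ref{Serre-com}, to transport the $n$-representation infinite property from $\Lambda$ to $\Gamma$. Two things need to be verified: $\gldim \Gamma \leq n$, and $\nu_n^{-i}(\Gamma) \in \mod \Gamma$ for every $i \geq 0$.

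For the global dimension bound, I would analyse a projective $\Gamma$-resolution of each simple $\Gamma$-module. The indecomposable summands of $T$ split into those in $\add Q$ (whose corresponding simples have resolutions that are easily controlled by the tilting property, since $Q$ is projective) and the single new summand $K_m$, giving one ``new'' simple $S_m'$. For $S_m'$, the two halves of the exact sequence~\eqref{ast} --- the projective resolution $0 \to S \to P_1 \to \cdots \to P_m \to K_m \to 0$ and the coresolution $0 \to K_m \to P_{m+1} \to \cdots \to P_n \to \tau_n^-(S) \to 0$ --- can be spliced together under $F$ to produce a projective $\Gamma$-resolution of length at most $n$, extending the argument of~\cite[Proposition~3.3]{IO1} from the $n$-representation finite case to the present setting.

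For the condition $\nu_n^{-i}(\Gamma) \in \mod \Gamma$, applying $F^{-1}$ and Lemma~\ref{Serre-com} reduces the problem to showing $\Ext^k_\Lambda(T,\nu_n^{-i}(T)) = 0$ for every $k \neq 0$ and $i \geq 1$ (the case $i = 0$ being the tilting axiom). Writing $T = Q \oplus K_m$: since $Q \in \add \Lambda$ and $\Lambda$ is $n$-representation infinite, $Q \in \mathscr{N}_\Lambda^-$, hence $\nu_n^{-i}(Q) = \tau_n^{-i}(Q)$ by Lemma~\ref{lem0}, and the required $\Ext$-vanishing follows from projectivity of $Q$ together with Serre duality on $\bdcat{}{\Lambda}$. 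For $K_m$, I would use the quasi-isomorphism $K_m \simeq [P_{m+1} \to \cdots \to P_n \to \tau_n^-(S)]$ coming from the coresolution half of~\eqref{ast}, whose terms all lie in $\mathscr{N}_\Lambda^-$ ($\tau_n^-(S) = \nu_n^-(S)$ being in $\mathscr{N}_\Lambda^-$ since the latter is closed under $\nu_n^-$ on summands of $\Lambda$). Applying $\nu_n^{-i}$ termwise yields a complex of modules in $\mathscr{N}_\Lambda^-$, and a descending induction on $j$ using the triangles $K_{j-1} \to P_j \to K_j$ shows that this complex remains acyclic outside degree $0$, so that $\nu_n^{-i}(K_m)$ is itself a genuine module. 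The resulting $\Ext$-vanishing with $T$ then follows from a second Serre-duality computation combined with $\mathscr{N}_\Lambda^\pm$-membership of the objects involved.

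The main obstacle will be the inductive step of the coresolution argument: one must verify that applying $\tau_n^{-i}$ to each short exact sequence $0 \to K_{j-1} \to P_j \to K_j \to 0$ yields an exact sequence in $\mod \Lambda$, that is, that the induced map $\tau_n^{-i}(P_j) \to \tau_n^{-i}(K_j)$ remains surjective. This is delicate because $\tau_n^-$ is not exact in the naive sense; the key input is that higher Auslander-Reiten theory on $\mathscr{N}_\Lambda^-$ for an $n$-representation infinite algebra furnishes the vanishing of all relevant cohomological obstructions, so the iterated sequence retains its exactness.
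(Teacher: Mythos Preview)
Your outline is correct and runs parallel to the paper's argument, but the two proofs diverge in exactly the two sub-steps you identify.

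For $\gldim\Gamma\leq n$, the paper does \emph{not} build projective resolutions of the simple $\Gamma$-modules. Instead it uses the characterisation $\gldim\Gamma=\max\{\ell\mid\Ext^\ell_\Gamma(D\Gamma,\Gamma)\neq0\}$ (valid since $\gldim\Gamma<\infty$ by derived invariance), transports through $F$ to $\Hom_{\bdcat{}{\Lambda}}(\nu T,T[\ell])\simeq\Ext_\Lambda^{\ell-n}(T,\nu_n^{-1}T)$, and kills this for $\ell>n$ by the very same $\Ext$-vanishing $\Ext^{>0}_\Lambda(T,\tau_n^{-i}T)=0$ that you need anyway for the second half. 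This is shorter than the simple-by-simple resolution approach you propose, and it avoids having to treat the simples over the $Q$-summands separately (which is not quite as ``easily controlled'' as you suggest).

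For $K_m\in\mathscr{N}_\Lambda^-$, the paper runs an \emph{ascending} induction from $K_0=S$ via the short exact sequences $0\to K_\ell\to P_{\ell+1}\to K_{\ell+1}\to 0$; this direction has a genuine obstruction $\mathrm{H}^{-1}(\nu_n^{-i}K_{\ell+1})$, handled by an auxiliary lemma (Lemma~\ref{lem1}) requiring the extra hypothesis $\Ext^1_\Lambda(K_{\ell+1},\tau_n^i(D\Lambda))=0$, which is then checked by dimension shifting. Your \emph{descending} induction from $K_n=\tau_n^-(S)$ is a legitimate alternative, and in fact the ``main obstacle'' you worry about is not an obstacle at all: since $\gldim\Lambda\leq n$, every module has injective dimension $\leq n$, so $\nu_n^{-1}=\RHom_\Lambda(D\Lambda,-)[n]$ sends $\bdcat{}{\Lambda}^{\leq0}$ to $\bdcat{}{\Lambda}^{\leq0}$; iterating, $\mathrm{H}^{>0}(\nu_n^{-i}X)=0$ for every module $X$ and every $i\geq0$. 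Hence $\mathrm{H}^1(\nu_n^{-i}K_{j-1})=0$ automatically, the map $\tau_n^{-i}(P_j)\to\tau_n^{-i}(K_j)$ is surjective, and your descending induction goes through without any special input from higher Auslander--Reiten theory. So your route for this step is arguably cleaner than the paper's; you just over-estimated its difficulty.
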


To prove the above theorem, 
we first give the following lemma.

\begin{lem}\label{lem1}
Let $0 \rightarrow X \rightarrow Y \rightarrow Z \rightarrow 0$ be an exact sequence in $\mathrm{mod}\Lambda$. 
If  $X,Y \in \mathscr{N}_\Lambda^-$ and $\Ext^1_{\Lambda}(Z,\tau^{i}_n (D\Lambda))=0$ holds for any $i \geq 1$,
then we have $Z \in \mathscr{N}_\Lambda^-$.
\end{lem}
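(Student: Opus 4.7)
My plan is to prove the conclusion by induction on $i \ge 0$, showing that $\nu_n^{-i}(Z) \in \mathrm{mod}\Lambda$. The base case $i=0$ is immediate.

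For the inductive step, assume $\nu_n^{-j}(Z) \in \mathrm{mod}\Lambda$ for all $0 \le j \le i$. Then by the criterion stated just before Lemma~\ref{lem0} together with the relation $\tau_n^- = \mathrm{H}^0(\nu_n^- -)$, we have $\nu_n^{-j}(Z) \simeq \tau_n^{-j}(Z)$ in $\bdcat{}{\Lambda}$ for these $j$. Since $X, Y \in \mathscr{N}_\Lambda^-$ implies that $\tau_n^{-i}(X), \tau_n^{-i}(Y) \in \mathscr{N}_\Lambda^-$, applying $\nu_n^{-i}$ to the triangle associated with the given short exact sequence yields (after passing to cohomology, noting all three terms are concentrated in degree $0$) a short exact sequence
\[
0 \longrightarrow \tau_n^{-i}(X) \longrightarrow \tau_n^{-i}(Y) \longrightarrow \tau_n^{-i}(Z) \longrightarrow 0
\]
in $\mathrm{mod}\Lambda$. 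I want to show $\nu_n^-(\tau_n^{-i}Z) = \nu_n^{-(i+1)}(Z)$ is a module, i.e. that $\mathrm{Ext}^k_\Lambda(D\Lambda, \tau_n^{-i}Z) = 0$ for all $k \ne n$.

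Applying the long exact sequence of $\mathrm{Ext}^*_\Lambda(D\Lambda, -)$ to the above, and using that $\tau_n^{-i}X, \tau_n^{-i}Y \in \mathscr{N}_\Lambda^-$ kills $\mathrm{Ext}^k_\Lambda(D\Lambda, -)$ for $k \ne n$ on the outer terms, the vanishing $\mathrm{Ext}^k_\Lambda(D\Lambda, \tau_n^{-i}Z) = 0$ is automatic for $k \le n-2$, and one obtains an injection
\[
\mathrm{Ext}^{n-1}_\Lambda(D\Lambda, \tau_n^{-i}Z) \hookrightarrow \mathrm{Ext}^n_\Lambda(D\Lambda, \tau_n^{-i}X).
\]
The key step — and the main obstacle — is to show that the source actually vanishes; this is where the hypothesis on $Z$ must enter. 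I would use Serre duality for $\bdcat{}{\Lambda}$ (available since $\gldim\Lambda \le n$): the identity $\nu = \nu_n[n]$ together with $\nu_n^{-i}Z \simeq \tau_n^{-i}Z$ in the derived category gives
\[
\mathrm{Ext}^{n-1}_\Lambda(D\Lambda, \tau_n^{-i}Z) \simeq D\mathrm{Hom}_{\bdcat{}{\Lambda}}(\tau_n^{-i}Z, \nu_n(D\Lambda)[1]) \simeq D\mathrm{Hom}_{\bdcat{}{\Lambda}}(\nu_n^{-i}Z, \nu_n(D\Lambda)[1]).
\]
Since $\nu_n$ is an autoequivalence of $\bdcat{}{\Lambda}$, I can slide it across to get $D\mathrm{Hom}_{\bdcat{}{\Lambda}}(Z, \nu_n^{i+1}(D\Lambda)[1])$, and by Lemma~\ref{lem0} applied to $D\Lambda \in \mathscr{N}_\Lambda^+$ this equals $D\mathrm{Ext}^1_\Lambda(Z, \tau_n^{i+1}(D\Lambda))$, which vanishes by hypothesis (since $i+1 \ge 1$). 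This completes the induction and hence the proof.
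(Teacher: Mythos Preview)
Your proof is correct and follows essentially the same strategy as the paper: induction on $i$, the long exact sequence coming from the triangle, and a duality argument to identify the one potentially nonvanishing cohomology with $D\Ext^1_\Lambda(Z,\tau_n^{i+1}(D\Lambda))$. The only cosmetic difference is that the paper computes $\mathrm{H}^{-1}(\nu_n^{-i}(Z))$ via $\Tor_1^\Lambda(-,\Ext^n_\Lambda(D\Lambda,\Lambda))$ and the Tor--Ext duality of \cite[Appendix, Proposition~4.11]{ASS}, whereas you phrase the same step as $\Ext^{n-1}_\Lambda(D\Lambda,\tau_n^{-i}Z)$ and invoke Serre duality directly; these are the same computation in different language.
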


\begin{proof}
We show that $\nu_n^{-i}(Z) \in \mathrm{mod}\Lambda$ for any $i \geq 0$ by induction on $i$.
We have nothing to show in the case $i=0$.
Let $i \geq 1$. 
We assume that $\nu_n^{-i+1}(Z) \in \mathrm{mod}\Lambda$ and prove that $\mathrm{H}^{j}(\nu_n^{-i}(Z))=0$ for $j  \neq 0$.

The exact sequence gives rise to the triangle $X \to Y \to Z \to X[1]$ in $\bdcat{}{\Lambda}$. 
By applying the functor $\nu_n^{-i}$ to this triangle, we have the triangle $\nu_n^{-i}(X) \to \nu_n^{-i}(Y) \to \nu_n^{-i}(Z) \to \nu_n^{-i}(X[1])$ in $\bdcat{}{\Lambda}$. 
Then, by taking the homology, we have the following long exact sequence
\begin{eqnarray*}
\cdots\cdots  \rightarrow \mathrm{H}^{j-1}(\nu_n^{-i}(Z))  \rightarrow \mathrm{H}^{j}(\nu_n^{-i}(X)) \rightarrow \mathrm{H}^{j}(\nu_n^{-i}(Y)) 
 \rightarrow \mathrm{H}^{j}(\nu_n^{-i}(Z)) \rightarrow \mathrm{H}^{j+1}(\nu_n^{-i}(X))  \rightarrow \cdots\cdots.
\end{eqnarray*}

By this exact sequence, we  have $\mathrm{H}^{j}(\nu_n^{-i}(Z))=0$ for $j\neq 0,1$ because we have $\mathrm{H}^{j}(\nu_n^{-i}(X))=0=\mathrm{H}^{j}(\nu_n^{-i}(Y))$ for any $j \neq 0$.

Moreover, using \cite[Appendix. Proposition 4.11]{ASS}, we have
\begin{eqnarray*}
\mathrm{H}^{-1}(\nu_n^{-i}(Z)) 
&\simeq& \Tor^{\Lambda}_1(\tau_n^{-i+1}(Z),\Ext^n_{\Lambda}(D\Lambda,\Lambda))
\ \ \ \ \ \ \ \ \ \ \  \ \ \ (\nu_n^{-i+1}(Z) \simeq \tau_n^{-i+1}(Z)) \\
&\simeq& D\Ext^1_{\Lambda}(\tau_n^{-i+1}(Z),D\Ext^n_{\Lambda}(D\Lambda,\Lambda)) \\
&\simeq&  D\Ext^1_{\Lambda}(\tau_n^{-i+1}(Z),\tau_n (D\Lambda))\ \ \ \ \ \ \ \ \ \ \ \ \ \ \ \ \ (D(\tau_n^-\Lambda)  \simeq \tau_n (D\Lambda)) \\
&\simeq&  D\Ext^1_{\Lambda}(Z,\tau^{i}_n (D\Lambda))=0.\ \ \ \ \ \ \ \ \ \ \ \ \ \ \ \ \ \ \ \ \  (\mbox{Lemma } \ref{lem0})
\end{eqnarray*}
Therefore, we have $\mathrm{H}^{j}(\nu_n^{-i}(Z))=0$ for $j  \neq 0$ and hence $\nu_n^{-i}(Z)\in\mod\La$. 
\end{proof}

Then we obtain the following consequence.

\begin{prop}\label{van-lem1}
For any $0\leq \ell \leq n$, we have $K_\ell \in \mathscr{N}_\Lambda^-$. 
In particular, we have $T \in \mathscr{N}_\Lambda^-$.
\end{prop}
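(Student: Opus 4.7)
The plan is induction on $\ell$. The base case $\ell = 0$ is immediate, since $K_0 = S$ is a direct summand of $\Lambda$, which lies in $\mathscr{N}_\Lambda^-$ by the definition of $n$-representation infiniteness. The ``in particular'' clause will then follow automatically at the end: $T = Q \oplus K_m$, where $Q$ is a summand of $\Lambda \in \mathscr{N}_\Lambda^-$ and $K_m \in \mathscr{N}_\Lambda^-$ by the main induction.

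For the inductive step with $1 \leq \ell \leq n$, I would extract from the resolution \eqref{ast} the short exact sequence
\[
0 \to K_{\ell-1} \to P_\ell \to K_\ell \to 0.
\]
By the inductive hypothesis $K_{\ell-1} \in \mathscr{N}_\Lambda^-$, and $P_\ell \in \add Q \subset \mathscr{N}_\Lambda^-$, so Lemma \ref{lem1} applied with $Z = K_\ell$ reduces everything to verifying the Ext vanishing
\[
\Ext^1_\Lambda(K_\ell, \tau_n^i(D\Lambda)) = 0 \qquad \text{for all } i \geq 1.
\]

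To prove this vanishing, note that the right-hand portion $0 \to K_\ell \to P_{\ell+1} \to \cdots \to P_n \to \tau_n^-(S) \to 0$ of \eqref{ast} displays $K_\ell$ as the $(n-\ell)$-th syzygy of $\tau_n^-(S)$, so dimension shifting through the projective modules $P_{\ell+1},\dots,P_n$ yields $\Ext^1_\Lambda(K_\ell, -) \simeq \Ext^{n-\ell+1}_\Lambda(\tau_n^-(S), -)$. The crux is then a double application of Serre duality. Since $S \in \mathscr{N}_\Lambda^-$ we have $\tau_n^-(S) = \nu_n^-(S)$ by Lemma \ref{lem0}, and combined with $\nu \circ \nu_n^- \simeq [n]$ a first Serre duality converts $\Ext^{n-\ell+1}_\Lambda(\tau_n^-(S), \tau_n^i(D\Lambda))$ into $D\Ext^{\ell-1}_\Lambda(\tau_n^i(D\Lambda), S)$. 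A second Serre duality, using the identity $\nu(\tau_n^i(D\Lambda)) = \tau_n^{i+1}(D\Lambda)[n]$ (immediate from $\nu_n = \nu \circ [-n]$ together with $\tau_n^i(D\Lambda) = \nu_n^i(D\Lambda)$), then converts this further into $\Ext^{n-\ell+1}_\Lambda(S, \tau_n^{i+1}(D\Lambda))$, up to $K$-duality. For $1 \leq \ell \leq n$ the shift $n - \ell + 1$ is at least $1$, and since $S$ is projective this Ext group vanishes, completing the induction.

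I expect the main obstacle to be identifying the correct form of the Ext vanishing and tracking indices through two Serre dualities; once the degrees line up, the vanishing is forced simply by the projectivity of $S$.
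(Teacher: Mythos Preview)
Your proof is correct and follows the same induction-plus-Lemma~\ref{lem1} strategy as the paper. The one place you diverge is in the translation step: the paper obtains
\[
\Ext^{n-\ell+1}_\Lambda(\tau_n^-(S),\tau_n^i(D\Lambda)) \simeq \Ext^{n-\ell+1}_\Lambda(S,\tau_n^{i+1}(D\Lambda))
\]
in one move, simply by applying the auto-equivalence $\nu_n$ to both arguments (and citing Lemma~\ref{lem0} to pass between $\tau_n$ and $\nu_n$). Your double Serre duality reaches the same isomorphism but is a more circuitous way to say ``$\nu_n$ is an equivalence''; you could replace those two paragraphs with that single observation.
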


\begin{proof}
We prove the assertion by induction on $\ell$.
In the case $\ell=0$, we have $K_0 =S \in \mathscr{N}_\Lambda^-$ since $\Lambda$ is $n$-representation infinite.

Assume that $K_\ell \in \mathscr{N}_\Lambda^-$.
Since $\Lambda$ is $n$-representation infinite, we have $P_{\ell+1} \in \mathscr{N}_\Lambda^-$.
Moreover we have
\begin{eqnarray*}
\Ext_{\Lambda}^1(K_{\ell+1},\tau_n^i(D\Lambda))
&\simeq& \Ext_{\Lambda}^{n-\ell}(\tau_n^-(S),\tau_n^i(D\Lambda))\\
&\simeq& \Ext_{\Lambda}^{n-\ell}(S,\tau_n^{i+1}(D\Lambda)) =0 \ \ \ \ \ (\mbox{Lemma } \ref{lem0})
\end{eqnarray*}
for any $i \geq 1$.
Thus we can apply Lemma \ref{lem1} to the exact sequence $0 \to K_\ell \to P_{\ell+1} \to K_{\ell+1} \to 0$ and hence we have $K_{\ell+1} \in \mathscr{N}_\Lambda^-$.
\end{proof}

Moreover we show the following lemma.

\begin{lem}\label{van-lem2}
We have $\Ext_{\Lambda}^j(T, \tau_n^{-i} (T))=0$ for any $j>0$ and $i\geq0$. 
\end{lem}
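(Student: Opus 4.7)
The plan is to reduce the vanishing to $\Ext$ against the simple projective module $S$, where it holds automatically. First, since $T = Q \oplus K_m$ with $Q$ projective, we have $\Ext^j_\Lambda(T, M) = \Ext^j_\Lambda(K_m, M)$ for any $M \in \mod\Lambda$ and $j \geq 1$, and the bound $\pd K_m = m$ further restricts attention to $1 \leq j \leq m$. The case $i = 0$ is immediate from the tilting axiom (T2) applied to the summand $K_m$ of $T$, so we may assume $i \geq 1$.

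Next, I would apply $\Hom_\Lambda(-, \tau_n^{-i}T)$ to each short exact sequence $0 \to K_\ell \to P_{\ell+1} \to K_{\ell+1} \to 0$ extracted from the resolution (\ref{ast}). Since each $P_{\ell+1}$ is projective, the associated long exact $\Ext$ sequences give isomorphisms $\Ext^j_\Lambda(K_\ell, \tau_n^{-i}T) \simeq \Ext^{j+1}_\Lambda(K_{\ell+1}, \tau_n^{-i}T)$ for all $j \geq 1$. Iterating from $\ell = m$ up to $\ell = n$, and using $K_n = \tau_n^-(S)$, yields
\[
\Ext^j_\Lambda(K_m, \tau_n^{-i}T) \simeq \Ext^{j+n-m}_\Lambda(\tau_n^-(S), \tau_n^{-i}T).
\]

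To finish, I would transfer the $\Ext$ across the Nakayama functor. The hypothesis $\Ext^i_\Lambda(D\Lambda, S) = 0$ for $0 \leq i < n$ (automatic in our setting) together with $\gldim\Lambda \leq n$ forces $\nu_n^-(S)$ to be cohomologically concentrated in degree $0$, so $\nu_n^-(S) \simeq \tau_n^-(S)$ in $\bdcat{}{\Lambda}$. Combined with $T \in \mathscr{N}_\Lambda^-$ (Proposition \ref{van-lem1}) and Lemma \ref{lem0}, giving $\tau_n^{-i}(T) \simeq \nu_n^{-i}(T)$ in $\bdcat{}{\Lambda}$, applying the triangle autoequivalence $\nu_n$ to both arguments yields
\[
\Ext^{j+n-m}_\Lambda(\tau_n^-(S), \tau_n^{-i}T) \simeq \Hom_{\bdcat{}{\Lambda}}(S, \nu_n^{-(i-1)}(T)[j+n-m]) = \Ext^{j+n-m}_\Lambda(S, \tau_n^{-(i-1)}T),
\]
where the last equality uses $i - 1 \geq 0$ so that $\nu_n^{-(i-1)}(T) \in \mod\Lambda$. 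Since $S$ is projective and $j + n - m \geq 1$, this final $\Ext$ vanishes.

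The main technical point is the identification $\nu_n^-(S) \simeq \tau_n^-(S)$ in $\bdcat{}{\Lambda}$; once secured, the $\nu_n$-equivariance transports the computation onto $\Ext$ from the simple projective $S$, where the conclusion is automatic.
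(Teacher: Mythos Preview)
Your argument is correct and follows essentially the same route as the paper's proof: reduce $\Ext^j_\Lambda(T,-)$ to $\Ext^j_\Lambda(K_m,-)$ by projectivity of $Q$, dimension-shift along the resolution \eqref{ast} to $\Ext^{j+n-m}_\Lambda(\tau_n^-(S),\tau_n^{-i}T)$, and then use $\tau_n^-(S)\simeq\nu_n^-(S)$ together with Lemma~\ref{lem0} and the autoequivalence $\nu_n$ to obtain $\Ext^{j+n-m}_\Lambda(S,\tau_n^{-i+1}T)=0$. The paper compresses these steps into a single displayed chain of isomorphisms, citing only $S\in\mathscr{N}_\Lambda^-$ and Lemma~\ref{lem0} at the last step, but the content is identical.
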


\begin{proof}
Since $T$ is a tilting module, we have $\Ext_{\Lambda}^j(T, T)=0$ for any $j>0$. 
Assume that $i\geq1$. Then we have 
\begin{eqnarray*}
\Ext_{\Lambda}^j(T, \tau_n^{-i} (T))
&\cong & \Ext_{\Lambda}^{j}(K_m,\tau^{-i}_n (T))\\
&\cong & \Ext_{\Lambda}^{j+n-m}(\tau^{-}_n(S),\tau^{-i}_n (T))\ \ \ \ \ \ \ \ \ (\tau^-_n(S)=K_n)\\
&\cong & \Ext_{\Lambda}^{j+n-m}(S,\tau^{-i+1}_n(T))=0.\ \ \  \ \ (S\in \mathcal{N}^-_\La,\ \mbox{Lemma }\ref{lem0})
\end{eqnarray*}
Thus we have the assertion.
\end{proof}

Now we are ready to prove Theorem \ref{main_thm1}.

\begin{proof}[Proof of Theorem \ref{main_thm1}]
First we show that $\gldim \Gamma \leq n$. 
We have $\gldim\La \leq n$ by definition. On the other hand, $\Lambda$ and $\Gamma$ are derived equivalent. Therefore,  we have $\gldim \Gamma < \infty$. 
Then we have
\begin{eqnarray*}
\gldim \Gamma &=& \max \{ \ell \ | \ \Ext^{\ell}_{\Gamma}(D\Gamma,\Gamma) \neq 0 \} \\
&=& \max \{ \ell \ | \ \Hom_{\bdcat{}{\Gamma}}(\nu (\Gamma),\Gamma[\ell]) \neq 0 \} \\
&=& \max \{ \ell \ | \  \Hom_{\bdcat{}{\Lambda}}(\nu (T),T[\ell]) \neq 0 \}.
\end{eqnarray*}
Moreover, we have
\begin{eqnarray*}
\Hom_{\bdcat{}{\Lambda}}(\nu (T),T[\ell])
&\simeq& \Hom_{\bdcat{}{\Lambda}}(T,\nu^{-1}(T)[\ell]) \\
&\simeq& \Hom_{\bdcat{}{\Lambda}}(T,\nu^{-1}_n(T)[\ell-n]) \\
&\simeq&   \Ext^{\ell-n}_{\Lambda}(T,\nu^{-1}_n(T))\\
&=&   0. \ \ \ \ \ \ \ \ \ \ \ \ \ \ \ \ \ \ \ \ \ \ \ \ \  (\mbox{Lemma \ref{van-lem2}})
\end{eqnarray*}
for any $\ell>n$. Thus we have $\gldim \Gamma \leq n$.

Furthermore, for any $i\geq0$, we obtain 
\begin{eqnarray*}
\nu^{-i}_n (\Gamma)
&\simeq& \nu^{-i}_n \Hom_{\Lambda}(T,T)\\
&\simeq& \nu^{-i}_n \RHom_{\Lambda}(T,T) \ \ \ \ \ \ \ \ \ (\mbox{Lemma }  \ref{van-lem2}) \\
&\simeq& \RHom_{\Lambda}(T,\nu^{-i}_n(T)) \ \ \ \ \ \ \ (\mbox{Lemma }  \ref{Serre-com})  \\
&\simeq& \Hom_{\Lambda}(T,\tau^{-i}_n(T)).\ \ \ \ \ \ \ \  (\mbox{Lemmas } \ref{lem0},\ref{van-lem2}, \mbox{Proposition }  \ref{van-lem1})
\end{eqnarray*}

Thus we have $\Gamma \in \mathscr{N}_\Gamma^-$. 
Consequently $\Gamma$ is $n$-representation infinite.
\end{proof}

\subsection{$m$-APR tilting modules induce tilting modules over higher preprojective algebras}

In this subsection, 
we show that $m$-APR tilting modules over $n$-representation infinite algebras lift to tilting modules over the corresponding $(n+1)$-preprojective algebras. 
We also study a relationship of the endomorphism algebras of these two tilting modules.

Let $\La$ be an $n$-representation infinite algebra and we keep the notation given in the beginning of this section. 
Let $\widehat{\Lambda}:=\bigoplus_{i \geq 0}\widehat{\Lambda}_i$ and $\mathcal{D}(\widehat{\Lambda}):=\mathcal{D}(\Mod\widehat{\Lambda})$. 
For a $\mathbb{Z}$-graded $\widehat{\Lambda}$-module $X$, we write $X_{\ell}$ the degree $\ell$-th part of $X$. 
For a $\mathbb{Z}$-graded finitely generated $\widehat{\Lambda}$-module $X$, 
the algebra  $\End_{\widehat{\Lambda}}(X)$ can be regarded 
as a $\mathbb{Z}$-graded algebra by $\End_{\widehat{\Lambda}}(X)_i=\Hom_{\widehat{\Lambda}}(X,X(i))_0$, where $(i)$ is a graded shift functor and 
$\Hom_{\widehat{\Lambda}}(X,X)_0:=\{ f \in \Hom_{\widehat{\Lambda}}(X,X) \ | \ \mbox{$f(X_i) \subset X_i$ for any $i$} \}$.

Moreover, the algebra $
\Lambda^{\op} \otimes_K \widehat{\Lambda}$ can be regarded as a $\mathbb{Z}$-graded algebra by $(\Lambda^{\op} \otimes_K \widehat{\Lambda})_i  :=\Lambda^{\op} \otimes_K (\widehat{\Lambda})_i$.
Thus we regard $\widehat{\Lambda}$ as a $\mathbb{Z}$-graded $(\Lambda^{\op} \otimes_K \widehat{\Lambda})$-module and we have a functor
\[
\widehat{( \ \ )}:= -\otimes_{\Lambda}\widehat{\Lambda}: \mod\Lambda \longrightarrow \gr \widehat{\Lambda}.
\]
Note that we have 
\[
\widehat{X}_i = \begin{cases} 0 & (i <0) \\ \tau^{-i}_n(X) & ( i \geq 0) \end{cases}
\]
for any $X\in \mod\Lambda$.
Then the aim of  this subsection is to prove the following result.

\begin{thm}\label{main_thm2}
Under the above setting, the following assertions hold.
 \begin{enumerate}
\def\labelenumi{(\theenumi)} 
\item $\widehat{T}$ is a tilting $\widehat{\Lambda}$-module of projective dimension $m$. 
\item $\End_{\widehat{\Lambda}}(\widehat{T})$ is isomorphic to 
the (n+1)-preprojective algebra $\widehat{\Gamma}$ of $\Gamma$ as $\mathbb{Z}$-graded algebras. In particular, $\End_{\widehat{\Lambda}}(\widehat{T})$ is a graded bimodule (n+1)-CY algebra of Gorenstein parameter 1.
\end{enumerate}
 \end{thm}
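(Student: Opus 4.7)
The plan is to apply the functor $\widehat{(-)} = -\otimes_\La \widehat{\La}\colon \mod\La \to \gr\widehat{\La}$ to carefully chosen resolutions over $\La$ and lift them to resolutions over $\widehat{\La}$. The key preliminary observation is that $\widehat{(-)}$ is exact when restricted to $\mathscr{N}_\La^-$: iterating the formula $\nu_n^- \simeq - \Lten \RHom_\La(D\La,\La)$ and using Lemma \ref{lem0} shows that $X \Lten \widehat{\La}_i \simeq \nu_n^{-i}(X) \simeq \tau_n^{-i}(X)$ is concentrated in degree $0$ for every $X \in \mathscr{N}_\La^-$ and $i \geq 0$; consequently $\Tor^\La_j(X,\widehat{\La}_i)=0$ for all $j>0$, and any exact sequence in $\mod\La$ whose terms lie in $\mathscr{N}_\La^-$ remains exact after applying $\widehat{(-)}$.

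For (1), I would verify the three tilting conditions as follows. Decompose $\widehat{T} = \widehat{Q}\oplus\widehat{K_m}$; $\widehat{Q}$ is projective over $\widehat{\La}$ since $Q$ is $\La$-projective. Applying $\widehat{(-)}$ to the projective resolution $0\to S \to P_1\to\cdots\to P_m \to K_m\to 0$, whose every term lies in $\mathscr{N}_\La^-$ by Proposition \ref{van-lem1}, yields an exact sequence $0\to \widehat{S}\to\widehat{P_1}\to\cdots\to\widehat{P_m}\to\widehat{K_m}\to 0$ in $\gr\widehat{\La}$. Since $S$ is simple projective, $S \simeq e\La$ for a primitive idempotent $e$ with $e(\operatorname{rad}\La)=0$, so $\widehat{S}\simeq e\widehat{\La}$ is projective over $\widehat{\La}$; each $\widehat{P_i}$ is likewise projective. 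This verifies (T1) with $\pd \widehat{T}\leq m$. For (T2), the tensor--Hom adjunction $\Hom_{\widehat{\La}}(\widehat{X},Y)\simeq \Hom_\La(X,Y)$, lifted through this projective resolution, gives
\[
\Ext^j_{\widehat{\La}}(\widehat{T},\widehat{T}) \simeq \Ext^j_\La(T,\widehat{T}) \simeq \bigoplus_{i\geq 0}\Ext^j_\La(T,\tau_n^{-i}(T)),
\]
which vanishes for $j>0$ by Lemma \ref{van-lem2}. For (T3), I would apply $\widehat{(-)}$ to the coresolution $0\to S\to P_1\to\cdots\to P_m\to K_m\to 0$ of $S$, now read as a coresolution by modules in $\add T$, and take its direct sum with the trivial coresolution of $Q$; by the same exactness principle the result is a coresolution of $\widehat{\La}$ by modules in $\add\widehat{T}$.

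For (2), the same adjunction computes the graded components
\[
\End_{\widehat{\La}}(\widehat{T})_i \simeq \Hom_\La(T,\widehat{T}_i) = \Hom_\La(T,\tau_n^{-i}(T)).
\]
The derived equivalence $F := \RHom_\La(T,-)\colon \bdcat{}{\La}\to \bdcat{}{\Gamma}$ induced by the tilting module sends $T$ to $\Gamma$ and, by Lemma \ref{Serre-com}, intertwines $\nu_n^{-i}$ on both sides. By Proposition \ref{van-lem1} and Theorem \ref{main_thm1}, $T\in\mathscr{N}_\La^-$ and $\Gamma\in\mathscr{N}_\Gamma^-$, so $\nu_n^{-i}(T)=\tau_n^{-i}(T)$ and $\nu_n^{-i}(\Gamma)=\tau_n^{-i}(\Gamma)$; together with Lemma \ref{van-lem2}, which ensures $F(\tau_n^{-i}(T)) \simeq \RHom_\La(T,\tau_n^{-i}(T))$ is concentrated in degree $0$ and equal to $\Hom_\La(T,\tau_n^{-i}(T))$, this identifies each graded piece $\End_{\widehat{\La}}(\widehat{T})_i \simeq \tau_n^{-i}(\Gamma) = \widehat{\Gamma}_i$ as right $\Gamma$-modules. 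The main obstacle will be upgrading this graded-piecewise isomorphism to an isomorphism of graded $K$-algebras: one must match the composition product on $\End_{\widehat{\La}}(\widehat{T})$ with the tensor product defining $\widehat{\Gamma} = T_\Gamma(\Ext^n_\Gamma(D\Gamma,\Gamma))$. I expect this to follow by recognising both products as the Yoneda-type product on $\tau_n^{-*}$-shifts transported through $F$, combined with the natural multiplicativity of the adjunction isomorphism.
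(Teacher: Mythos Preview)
Your proposal is correct. For (T1) and (T3) in part (1), and for part (2), your argument is essentially the paper's: the paper packages your graded-piece identifications and the multiplicativity check into an auxiliary ``orbit algebra'' $\Upsilon_\La(X)=\bigoplus_{i\ge0}\Hom_{\bdcat{}{\La}}(X,\nu_n^{-i}X)$ and proves $\End_{\widehat{\La}}(\widehat{X})\simeq\Upsilon_\La(X)$ for $X\in\mathscr{N}_\La^-$, then transports $\Upsilon_\La(T)\simeq\Upsilon_\Gamma(\Gamma)$ through the derived equivalence exactly as you describe.

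Where you genuinely diverge is in (T2). The paper uses the \emph{other} half of the resolution of Proposition~\ref{tilde_prop1}, namely $0\to\widehat{K}_m\to\widehat{P}_{m+1}\to\cdots\to\widehat{P}_n\to\widehat{S}(1)\to S(1)\to 0$, to obtain $\Ext^i_{\widehat{\La}}(\widehat{K}_m,\widehat{T})\simeq\Ext^{n+i-m+1}_{\widehat{\La}}(S,\widehat{T})$, and then invokes the $(n+1)$-Calabi--Yau duality of $\widehat{\La}$ (Theorem~\ref{CY_condition}) to flip this into $D\Hom_{\mathcal{D}(\widehat{\La})}(\widehat{T},S[m-i])$, which vanishes because $\widehat{P}_\ell\in\add\widehat{Q}$ and $\Hom_\La(Q,S)=0$. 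Your route is more self-contained: having checked $\Tor^\La_{>0}(X,\widehat{\La})=0$ for $X\in\mathscr{N}_\La^-$, the derived adjunction gives $\Ext^j_{\widehat{\La}}(\widehat{T},\widehat{T})\simeq\Ext^j_\La(T,\widehat{T})\simeq\bigoplus_{i\ge0}\Ext^j_\La(T,\tau_n^{-i}T)$, and Lemma~\ref{van-lem2} kills this directly. This avoids appealing to the Calabi--Yau property of $\widehat{\La}$ altogether and keeps the argument inside $\mod\La$; the paper's detour, on the other hand, makes the role of the CY structure visible and yields the extra information that $\widehat{K}_m$ is a syzygy of the simple $S$ over $\widehat{\La}$.
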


First we show Theorem \ref{main_thm2} (1). 
We need the following observation.

\begin{lem}\label{tilde_lem2}
Let $0 \to X \to Y \to Z \to 0$ be an exact sequence in $\mod\Lambda$
such that $X,Y,Z \in \mathscr{N}_\Lambda^-$. 
Then, 
we have an exact sequence 
\[
0 \to \widehat{X} \to \widehat{Y} \to \widehat{Z} \to 0
\]
in $\gr \widehat{\Lambda}$.
\end{lem}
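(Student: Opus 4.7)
The plan is to reduce exactness in $\gr \widehat{\La}$ to degree-wise exactness in $\mod \La$, and then obtain the latter by lifting the given short exact sequence to a distinguished triangle in $\bdcat{}{\La}$, applying $\nu_n^{-i}$, and reading off the long exact sequence of cohomology.

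First I would recall that a sequence of graded $\widehat{\La}$-modules is exact if and only if it is exact in every graded component, together with the formula $\widehat{M}_i = \tau_n^{-i}(M)$ for $i \geq 0$ (and zero otherwise) noted in the text just before the statement. Thus the claim reduces to proving that
\[
0 \to \tau_n^{-i}(X) \to \tau_n^{-i}(Y) \to \tau_n^{-i}(Z) \to 0
\]
is exact in $\mod \La$ for every $i \geq 0$; the case $i = 0$ is the hypothesis.

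For $i \geq 1$, the key step is to view the original short exact sequence as a distinguished triangle $X \to Y \to Z \to X[1]$ in $\bdcat{}{\La}$ and apply the triangle functor $\nu_n^{-i}$. In the associated long exact sequence of cohomology
\[
\cdots \to \Ho^{j-1}(\nu_n^{-i}(Z)) \to \Ho^j(\nu_n^{-i}(X)) \to \Ho^j(\nu_n^{-i}(Y)) \to \Ho^j(\nu_n^{-i}(Z)) \to \Ho^{j+1}(\nu_n^{-i}(X)) \to \cdots,
\]
the hypothesis that $X, Y, Z \in \mathscr{N}_\La^-$ forces all cohomology in degrees $j \neq 0$ to vanish (using the characterization of $\mathscr{N}_\La^-$ recorded immediately after its definition). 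The long sequence therefore collapses to a short exact sequence in degree $0$, and Lemma \ref{lem0} identifies each $\Ho^0(\nu_n^{-i}(M))$ with $\tau_n^{-i}(M)$, delivering the desired exactness in degree $i$.

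The argument is essentially a routine homological calculation, and I do not anticipate a serious obstacle. The only subtlety to verify is that the identifications $\widehat{M}_i \simeq \tau_n^{-i}(M)$ and the isomorphisms $\Ho^0(\nu_n^{-i}(-)) \simeq \tau_n^{-i}(-)$ are natural in $M$, so that the maps appearing in the cohomology long exact sequence agree, up to these canonical isomorphisms, with the degree-$i$ components of the morphisms $\widehat{X} \to \widehat{Y} \to \widehat{Z}$ coming from the functor $\widehat{(\ )} = -\otimes_\La \widehat{\La}$. Once this functoriality has been checked, the degree-wise exact sequences assemble into the claimed short exact sequence in $\gr \widehat{\La}$.
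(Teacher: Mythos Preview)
Your proposal is correct and follows essentially the same approach as the paper: lift the short exact sequence to a triangle, apply $\nu_n^{-i}$, use membership in $\mathscr{N}_\Lambda^-$ to collapse the cohomology long exact sequence to degree $0$, and then identify $\Ho^0(\nu_n^{-i}(-))$ with $\tau_n^{-i}(-)=\widehat{(-)}_i$. Your extra care about the naturality of these identifications is a point the paper passes over silently but does not handle differently.
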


\begin{proof}
The exact sequence gives rise to a triangle $X \to Y \to Z \to X[1]$ in $\bdcat{}{\Lambda}$. 
By applying $\nu_n^{-i}$ to this triangle, we have a triangle $\nu_n^{-i}(X) \to \nu_n^{-i}(Y) \to \nu_n^{-i}(Z) \to \nu_n^{-i}(X[1])$ in $\bdcat{}{\Lambda}$. 
Then, by taking the homology, we have a long exact sequence
\begin{eqnarray*}
\cdots\cdots  \rightarrow \mathrm{H}^{j-1}(\nu_n^{-i}(Z))  \rightarrow \mathrm{H}^{j}(\nu_n^{-i}(X)) \rightarrow \mathrm{H}^{j}(\nu_n^{-i}(Y)) 
 \rightarrow \mathrm{H}^{j}(\nu_n^{-i}(Z)) \rightarrow \mathrm{H}^{j+1}(\nu_n^{-i}(X))  \rightarrow \cdots\cdots. 
\end{eqnarray*}
Because   $X,Y,Z \in \mathscr{N}_\Lambda^-$,
we have an exact sequence
\[
0 \to  \mathrm{H}^{0}(\nu_n^{-i}(X)) \rightarrow \mathrm{H}^{0}(\nu_n^{-i}(Y)) 
 \rightarrow \mathrm{H}^{0}(\nu_n^{-i}(Z))  \to 0.
\]
Since $\mathrm{H}^{0}(\nu_n^{-i}-) \simeq \tau^{-i}_n(-)$, 
we have the exact sequence $0 \to \tau^{-i}_n(X) \to \tau^{-i}_n(Y) \to \tau^{-i}_n(Z) \to 0$. 
Because $\tau^{-i}_n (X)=\widehat{X}_i$, 
we obtain the exact sequence $0 \to \widehat{X} \to \widehat{Y} \to \widehat{Z} \to 0$ in $\gr\widehat{\Lambda}$. 
\end{proof}

Next we show that the exact sequence \eqref{ast} induces a projective resolution of a simple $\widehat{\Lambda}$-module $S$,  
and $\widehat{K}_m$ is the $(n-m+1)$-th syzygy of $S$.
This fact plays an important role in the proof of Theorem \ref{main_thm2} (1).

\begin{prop}\label{tilde_prop1}
We regard $S$ as a $\mathbb{Z}$-graded simple $\widehat{\Lambda}$-module concentrated in degree $0$. 
Then there exists a projective resolution 
\begin{eqnarray}
0 \to \widehat{S} \xrightarrow{\widehat{a}_0} \widehat{P}_1 \xrightarrow{\widehat{a}_1} \cdots\cdots \xrightarrow{\widehat{a}_{n-1}}  \widehat{P}_n \xrightarrow{\widehat{a}_n} \widehat{S}(1) \to S(1) \to 0  \label{indu_mut}
\end{eqnarray}
of $S(1)$ in $\gr\widehat{\Lambda}$ such that $\widehat{P}_\ell \in \add\widehat{Q}$ and $\Im \widehat{a}_\ell\simeq\widehat{K}_\ell$ for any $1 \leq \ell\leq n$. 
\end{prop}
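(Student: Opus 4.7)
The plan is to apply the lifting functor $\widehat{(-)}$ to the minimal projective resolution \eqref{ast} and then attach the natural surjection $\widehat{S}(1) \twoheadrightarrow S(1)$ at the end. First I would check that every term in \eqref{ast} lies in $\mathscr{N}_\Lambda^-$: the projectives $P_\ell \in \add Q$ lie in $\mathscr{N}_\Lambda^-$ because $\Lambda$ itself does by $n$-representation infiniteness, and each syzygy $K_\ell$ lies there by Proposition \ref{van-lem1}. Splitting \eqref{ast} into short exact sequences $0 \to K_{\ell-1} \to P_\ell \to K_\ell \to 0$ and applying Lemma \ref{tilde_lem2} to each then produces exact sequences $0 \to \widehat{K}_{\ell-1} \to \widehat{P}_\ell \to \widehat{K}_\ell \to 0$ in $\gr\widehat{\Lambda}$ for every $1 \le \ell \le n$.

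Splicing these together, with $K_0 = S$ and $K_n = \tau_n^-(S)$, yields an exact sequence
\[
0 \to \widehat{S} \xrightarrow{\widehat{a}_0} \widehat{P}_1 \xrightarrow{\widehat{a}_1} \cdots \xrightarrow{\widehat{a}_{n-1}} \widehat{P}_n \to \widehat{\tau_n^-(S)} \to 0
\]
in $\gr\widehat{\Lambda}$ in which $\Im\widehat{a}_\ell \simeq \widehat{K}_\ell$ for every $\ell$. Because $P_\ell \in \add Q$, the lifted module $\widehat{P}_\ell$ belongs to $\add\widehat{Q}$, and the decomposition $\widehat{\Lambda} = \widehat{S} \oplus \widehat{Q}$ ensures that $\widehat{S}$ and every $\widehat{P}_\ell$ are graded projective $\widehat{\Lambda}$-modules.

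Finally, I would identify $\widehat{\tau_n^-(S)}$ as a graded $\widehat{\Lambda}$-submodule of $\widehat{S}(1)$ with quotient $S(1)$. Using the formula $\widehat{X}_i = \tau_n^{-i}(X)$ together with the identity $\tau_n^{-i}(\tau_n^-(S)) \simeq \tau_n^{-(i+1)}(S)$ — valid because $S, \tau_n^-(S), \dots$ all belong to $\mathscr{N}_\Lambda^-$, where $\tau_n^-$ agrees with $\nu_n^-$ by Lemma \ref{lem0} — one computes $\widehat{\tau_n^-(S)}_i = \tau_n^{-(i+1)}(S) = \widehat{S}_{i+1} = \widehat{S}(1)_i$ for $i\ge 0$, while $\widehat{\tau_n^-(S)}$ vanishes in negative degrees and $\widehat{S}(1)_{-1} = S$. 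This gives a natural injection $\widehat{K}_n \hookrightarrow \widehat{S}(1)$ whose cokernel is the simple $S$ concentrated in degree $-1$, namely $S(1)$; composing with $\widehat{P}_n \twoheadrightarrow \widehat{K}_n$ provides the map $\widehat{a}_n$, and $\widehat{S}(1)$ is projective as a graded shift of $\widehat{S}$. The main technical obstacle is verifying that this inclusion is $\widehat{\Lambda}$-linear and not merely a degreewise identification of $\Lambda$-modules, which follows from the tensor-algebra definition of $\widehat{\Lambda}$ and the compatibility of its multiplication with the iteration of $\tau_n^- \simeq -\otimes_\Lambda \Ext^n_\Lambda(D\Lambda,\Lambda)$ on $\mathscr{N}_\Lambda^-$.
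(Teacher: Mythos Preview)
Your proposal is correct and follows essentially the same approach as the paper: apply Proposition~\ref{van-lem1} and Lemma~\ref{tilde_lem2} to the resolution~\eqref{ast} to obtain the exact sequence ending in $\widehat{\tau_n^-(S)}$, then splice on the short exact sequence $0 \to \widehat{\tau_n^-(S)} \to \widehat{S}(1) \to S(1) \to 0$. The paper's proof is terser---it simply asserts this last short exact sequence without the degreewise computation you carry out---so your version actually supplies detail the paper omits.
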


\begin{proof}
By applying Proposition \ref{van-lem1} and Lemma \ref{tilde_lem2} to the exact sequence \eqref{ast}, 
we have an exact sequence 
\[
0 \to \widehat{S} \xrightarrow{\widehat{a}_0} \widehat{P}_1 \xrightarrow{\widehat{a}_1} \cdots\cdots \xrightarrow{\widehat{a}_{n-1}}  \widehat{P}_n \xrightarrow{\widehat{a}_n} \widehat{\tau_n^-(S)} \to 0
\]
in $\gr \widehat{\Lambda}$, where $\Im \widehat{a}_i=\widehat{K}_i$. 
Moreover, there exists an exact sequence 
\[
0 \to \widehat{\tau_{n}^{-}(S)} \to \widehat{S}(1) \to S(1) \to 0
\]
in $\gr \widehat{\Lambda}$.
By combining these exact sequences, we have the required resolution.
\end{proof}

We remark that the above resolution can be understood as a graded version of the resolution   investigated in \cite[Section 4]{IR}.

Next we give a proof of Theorem \ref{main_thm2} (1).

\begin{proof}[Proof of Theorem \ref{main_thm2} (1)]
We prove that $\widehat{T}$ is a tilting $\widehat{\Lambda}$-module. 
By Proposition \ref{tilde_prop1}, we have an exact sequence
\begin{eqnarray}\label{exact}
0 \to  \widehat{S} \to \widehat{P}_1 \to \cdots\cdots \to  \widehat{P}_m \to \widehat{K}_m \to 0
\end{eqnarray}
in $\mod\widehat{\Lambda}$. Therefore,  $\widehat{T}$ satisfies (T1) and (T3).

Next we prove that $\widehat{T}$ satisfies (T2), that is,  $\Ext_{\widehat{\Lambda}}^i(\widehat{T},\widehat{T})=0$ for any $i\geq 1$. 
By Proposition \ref{tilde_prop1}, we have the following exact sequence
\[
0 \to \widehat{K}_m \to \widehat{P}_{m+1} \to \cdots\cdots  \to  \widehat{P}_n \to \widehat{S} \to S \to 0
\]
in $\mod\widehat{\Lambda}$. 
Thus we have $\Ext_{\widehat{\Lambda}}^i(\widehat{K}_m,\widehat{T}) \simeq \Ext_{\widehat{\Lambda}}^{n+i-m+1}(S,\widehat{T})$ for any $i\geq 1$.
Moreover, we have
\begin{eqnarray*}
\Ext_{\widehat{\Lambda}}^{n+i-m+1}(S,\widehat{T}) 
&\simeq& \Hom_{\mathcal{D}(\widehat{\Lambda})}(S,\widehat{T}[n+i-m+1]) \\
&\simeq& D\Hom_{\mathcal{D}(\widehat{\Lambda})}(\widehat{T}[n+i-m+1],S[n+1])\ \ \ \  (\mbox{Theorem } \ref{CY_condition})\\
&\simeq& D\Hom_{\mathcal{D}(\widehat{\Lambda})}(\widehat{T},S[m-i])
\end{eqnarray*}
for any $i\geq 1$.

Since $S$ is a simple projective $\La$-modules, we get $\Hom_{\widehat{\Lambda}}(\widehat{Q},S)\simeq\Hom_{\Lambda}({Q},S)=0$.  Then, using the sequence (\ref{exact}), 
we have $\Hom_{\mathcal{D}(\widehat{\Lambda})}(\widehat{T},S[m-i])=0$ because 
$\widehat{P}_\ell\in\add\widehat{Q}$ for $1\leq \ell \leq m$. 
Therefore we have $\Ext_{\widehat{\Lambda}}^i(\widehat{K}_m,\widehat{T})=0$ and hence $\Ext_{\widehat{\Lambda}}^i(\widehat{T},\widehat{T})=0$ for any $i\geq 1$. 
\end{proof}

Next we show the isomorphism of $\mathbb{Z}$-graded algebras in Theorem \ref{main_thm2} (2).
For this purpose, we introduce the following terminology.

\begin{df}
Let $X \in \mathscr{N}_\Lambda^-$. 
We consider an algebra
\[
\Upsilon(X):=\Upsilon_{\Lambda}(X)=\bigoplus_{i \geq 0} \Hom_{\bdcat{}{\Lambda}}(X,\nu_n^{-i}(X))
\]
whose multiplication is defined by
\[
 \Hom_{\bdcat{}{\Lambda}}(X,\nu_n^{-i}(X)) \times  \Hom_{\bdcat{}{\Lambda}}(X,\nu_n^{-j}(X)) \ni  (g,f) \mapsto \nu^{-j}_n(g)\circ f \in  \Hom_{\bdcat{}{\Lambda}}(X,\nu_n^{-i-j}(X)).
\]
This algebra can be regarded as a  $\mathbb{Z}$-graded algebra by $\Upsilon(X)_i=\Hom_{\bdcat{}{\Lambda}}(X,\nu_n^{-i}(X))$.
\end{df}

Then the following isomorphism holds (cf. \cite[Proposition 4.7]{Am}\cite[Proposition 2.12]{IO1}).

\begin{lem}\label{tilde_lem3}
Let $X \in \mathscr{N}_\Lambda^-$.
Then there exists an isomorphism $\End_{\widehat{\Lambda}}(\widehat{X}) \simeq \Upsilon_\La(X)$ of $\mathbb{Z}$-graded algebras.
\end{lem}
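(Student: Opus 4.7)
The plan is to construct the desired isomorphism degree by degree via tensor-hom adjunction, and then verify that it respects multiplication.

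First, by the graded tensor-hom adjunction, viewing $X$ as a graded $\Lambda$-module concentrated in degree $0$, one has
$$\End_{\widehat{\Lambda}}(\widehat{X})_i=\Hom_{\gr\widehat{\Lambda}}(\widehat{X},\widehat{X}(i))\cong\Hom_\Lambda\bigl(X,(\widehat{X}(i))_0\bigr)=\Hom_\Lambda(X,\tau_n^{-i}(X)),$$
where the last equality uses the identity $\widehat{X}_i=\tau_n^{-i}(X)$ recorded earlier. Because $X\in\mathscr{N}_\Lambda^-$, Lemma \ref{lem0} identifies $\nu_n^{-i}(X)$ with the module $\tau_n^{-i}(X)$ in $\bdcat{}{\Lambda}$, so
$$\Hom_\Lambda(X,\tau_n^{-i}(X))\cong\Hom_{\bdcat{}{\Lambda}}(X,\nu_n^{-i}(X))=\Upsilon(X)_i.$$
Composing these gives a graded vector-space isomorphism $\Phi\colon\End_{\widehat{\Lambda}}(\widehat{X})\to\Upsilon(X)$.

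Next I would check multiplicativity. Given $\widetilde{g}\in\End_{\widehat{\Lambda}}(\widehat{X})_i$ and $\widetilde{f}\in\End_{\widehat{\Lambda}}(\widehat{X})_j$, let $g,f$ denote their adjoints $X\to\tau_n^{-i}(X)$ and $X\to\tau_n^{-j}(X)$, obtained by restricting to degree $0$. The algebra multiplication in $\End_{\widehat{\Lambda}}(\widehat{X})$ sends $(\widetilde{g},\widetilde{f})$ to $\widetilde{g}(j)\circ\widetilde{f}$. Restricting this composite to degree $0$: $\widetilde{f}$ acts as $f$, and then the degree-$j$ component of $\widetilde{g}$ acts on $\widehat{X}_j=\tau_n^{-j}(X)$. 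Because $\widetilde{g}$ is $\widehat{\Lambda}$-linear, its degree-$j$ component is forced by $g$ via the right action of $\widehat{\Lambda}_j$, and this right action on $\mathscr{N}_\Lambda^-$ coincides with the functor $\tau_n^{-j}(-)=-\otimes_\Lambda\widehat{\Lambda}_j$. Under the identification $\tau_n^{-j}\simeq\nu_n^{-j}$ on $\mathscr{N}_\Lambda^-$, this degree-$j$ component is $\nu_n^{-j}(g)$, so
$$\Phi(\widetilde{g}\cdot\widetilde{f})=\nu_n^{-j}(g)\circ f=\Phi(\widetilde{g})\cdot\Phi(\widetilde{f}),$$
which is exactly the Yoneda-type product in $\Upsilon(X)$.

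The main obstacle is this bookkeeping step. The delicate point is identifying the action of $\widehat{\Lambda}_j$ on $\widehat{X}_0=X$ with $\tau_n^{-j}(-)$, and further with $\nu_n^{-j}(-)$, coherently across all $i,j$. This rests on the functor $-\otimes_\Lambda\widehat{\Lambda}_j\simeq\tau_n^{-j}$ being naturally isomorphic to $\nu_n^{-j}$ on $\mathscr{N}_\Lambda^-$ (Lemma \ref{lem0}) and on the fact that the multiplication $\widehat{\Lambda}_i\otimes_\Lambda\widehat{\Lambda}_j\to\widehat{\Lambda}_{i+j}$ induces the natural isomorphism $\tau_n^{-j}\circ\tau_n^{-i}\simeq\tau_n^{-i-j}$. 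Once these identifications are laid out, the compatibility with the product on $\Upsilon(X)$ is a direct diagram chase, and $\Phi$ is the desired $\mathbb{Z}$-graded algebra isomorphism.
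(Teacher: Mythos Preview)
Your proposal is correct and is essentially the paper's own argument: the graded tensor--hom adjunction isomorphism you invoke is precisely the map $\rho\colon f\mapsto f|_{\widehat{X}_0}$ that the paper verifies is bijective by hand, and the passage $\Hom_\Lambda(X,\tau_n^{-i}(X))\simeq\Hom_{\bdcat{}{\Lambda}}(X,\nu_n^{-i}(X))$ via $X\in\mathscr{N}_\Lambda^-$ is identical. If anything, you are more careful than the paper about multiplicativity, which the paper simply asserts; your description of the degree-$j$ component of $\widetilde{g}$ as $g\otimes_\Lambda\mathrm{id}_{\widehat{\Lambda}_j}=\tau_n^{-j}(g)$ is exactly the content of the computation $f(\widehat{X}_0\cdot\widehat{\Lambda})=f(\widehat{X}_0)\cdot\widehat{\Lambda}$ hidden in the paper's injectivity step.
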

\begin{proof}
First we claim that the map 
\[
\rho:\Hom_{\widehat{\Lambda}}(\widehat{X},\widehat{X}(i))_0 \ni f \mapsto f|_{\widehat{X}_0} \in \Hom_{\Lambda}(X,\tau_n^{-i}(X))
\]
is an isomorphism  of groups for any $i \geq 0$.
We prove $\rho$ is monic. 
We take $f \in \Hom_{\widehat{\Lambda}}(\widehat{X},\widehat{X}(i))_0$ such that $\rho(f)=0$.
Then since $\widehat{X}$ is generated by the $0$-th part $\widehat{X}_0$, we have $f(\widehat{X})=f(\widehat{X}_0 \cdot \widehat{\Lambda})
=f(\widehat{X}_0)\widehat{\Lambda}=0$.
So we have $f=0$.

We prove $\rho$ is epic.
We take $g \in \Hom_{\Lambda}(X,\tau_n^{-i}(X))$.
Let $\iota:\widehat{\tau^{-i}_n(X)} \to \widehat{X}(i)$ be a natural embedding of $\mathbb{Z}$-graded $\widehat{\Lambda}$-modules.
We consider a composition $\iota  g \in \Hom_{\widehat{\Lambda}}(\widehat{X},\widehat{X}(i))_0$.
Then we have $\rho(\iota  g)=g$.
Thus the claim holds.

By the above claim and $X \in \mathscr{N}_\Lambda^-$, we have an isomorphism 
\[
\End_{\widehat{\Lambda}}(\widehat{X})
=\bigoplus_{i \geq 0} \Hom_{\widehat{\Lambda}}(\widehat{X},\widehat{X}(i))_0
\simeq  \bigoplus_{i \geq 0} \Hom_{\Lambda}(X,\tau_n^{-i}(X)) 
\simeq \bigoplus_{i \geq 0} \Hom_{\bdcat{}{\Lambda}}(X,\nu_n^{-i}(X)) \simeq \Upsilon_\La(X)
\]
of $\mathbb{Z}$-graded algebras.
\end{proof}

By Lemma \ref{Serre-com}, there exists an isomorphism 
\[
\varphi_{(-)} : \RHom_{\Lambda}(T,\nu_n^{-}(-)) \to \nu_n^{-}\RHom_{\Lambda}(T,-) 
\]
of functors.
For $i \geq 0$, we define a map $\varphi^{(i)}:\Hom_{\bdcat{}{\Lambda}}(T,\nu_n^{-i}T) \to \nu_n^{-i}\Hom_{\bdcat{}{\Lambda}}(T,T)$ by
\[
\varphi^{(i)}=
\begin{cases}
1 &  (i=0) \\
\nu_n^{-i+1}(\varphi_{T}) \circ \nu_n^{-i+2}(\varphi_{\nu_n^{-}T}) \circ \cdots \circ \nu_n^{-}(\varphi_{\nu_n^{-i+2}T}) \circ \varphi_{\nu_n^{-i+1}T}  & (i \geq 1).
\end{cases}
\]
Then one can check the following result.

\begin{lem} \label{tilde_lem4}
The following assertions hold.
\begin{enumerate}[(1)]
\item For $i \geq 0$, we have an isomorphism of groups $:$
\[
\Phi_i=\varphi^{(i)} \circ \Hom_{\Lambda}(T,-):
\Hom_{\bdcat{}{\Lambda}}(T,\nu_n^{-i}(T)) \to \Hom_{\bdcat{}{\Gamma}}(\Gamma,\nu_n^{-i}(\Gamma)) 
\]

\item We have an isomorphism of  $\mathbb{Z}$-graded algebras $:$
\[
\Phi=\bigoplus_{i \in \mathbb{Z}}\Phi_i:\Upsilon_{\Lambda}(T) \to \Upsilon_{\Gamma}(\Gamma).
\]

\end{enumerate}
\end{lem}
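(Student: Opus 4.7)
My plan is to derive both statements from the triangle equivalence $F := \RHom_\Lambda(T,-) : \bdcat{}{\Lambda} \to \bdcat{}{\Gamma}$ induced by the tilting module $T$, together with the natural isomorphism $\varphi$ encoding the commutation of $F$ with the Serre twists $\nu_n$ (Lemma \ref{Serre-com}).

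The first step is to set up the identifications. Since $T\in\mathscr{N}_\Lambda^-$ (Proposition \ref{van-lem1}), we have $\nu_n^{-i}(T)\simeq \tau_n^{-i}(T)\in\mod\Lambda$, and by Lemma \ref{van-lem2} the higher Ext groups $\Ext^{>0}_\Lambda(T,\nu_n^{-i}(T))$ vanish. Consequently the complex $\RHom_\Lambda(T,\nu_n^{-i}(T))$ is concentrated in degree zero, so $F$ applied to a morphism $T\to\nu_n^{-i}(T)$ reduces to $\Hom_\Lambda(T,-)$ applied to it. Moreover, iterating Lemma \ref{Serre-com} exactly as in the definition of $\varphi^{(i)}$ yields an isomorphism
\[
\varphi^{(i)} : \RHom_\Lambda(T,\nu_n^{-i}(T)) \xrightarrow{\sim} \nu_n^{-i}\RHom_\Lambda(T,T) \simeq \nu_n^{-i}(\Gamma)
\]
in $\bdcat{}{\Gamma}$.

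For part (1), since $F$ is a triangle equivalence, it induces an isomorphism
\[
\Hom_\Lambda(T,-): \Hom_{\bdcat{}{\Lambda}}(T,\nu_n^{-i}(T)) \xrightarrow{\sim} \Hom_{\bdcat{}{\Gamma}}(\Gamma, \RHom_\Lambda(T,\nu_n^{-i}(T))).
\]
Post-composition with $\varphi^{(i)}$ then yields the claimed isomorphism to $\Hom_{\bdcat{}{\Gamma}}(\Gamma,\nu_n^{-i}(\Gamma))$, so $\Phi_i$ is indeed a group isomorphism.

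For part (2), I must check that $\Phi$ respects products. Given $g\in\Hom(T,\nu_n^{-i}(T))$ and $f\in\Hom(T,\nu_n^{-j}(T))$, functoriality of $F$ gives $F(\nu_n^{-j}(g)\circ f) = F(\nu_n^{-j}(g))\circ F(f)$. The crucial point is that, under the identifications $\varphi^{(i)}$, $\varphi^{(j)}$, $\varphi^{(i+j)}$, the morphism $F(\nu_n^{-j}(g))$ corresponds to $\nu_n^{-j}(F(g))$; this amounts to the coherence relation
\[
\varphi^{(i+j)} = \nu_n^{-j}(\varphi^{(i)}) \circ \varphi^{(j)}_{\nu_n^{-i}(T)},
\]
which is immediate from the definition of $\varphi^{(i)}$ as an iterated composition together with the naturality of $\varphi_{(-)}$ in its argument. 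Combining these, $\Phi_{i+j}(\nu_n^{-j}(g)\circ f) = \nu_n^{-j}(\Phi_i(g))\circ \Phi_j(f)$, which is precisely the multiplication in $\Upsilon_\Gamma(\Gamma)$. The grading is preserved by construction.

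The main obstacle is the bookkeeping in part (2): tracking the nested applications of $\nu_n^-$ and $\varphi$ through the product formula, most conveniently by drawing the commutative diagram of the relevant four objects and filling in the faces by naturality of $\varphi$ and functoriality of $\nu_n^-$. The rest is a formal consequence of the tilting equivalence and the Serre-functor compatibility of Lemma \ref{Serre-com}.
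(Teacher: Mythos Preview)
Your proposal is correct and follows essentially the same approach as the paper's proof; the paper simply asserts that part (1) is ``obvious'' and that the multiplicativity diagram commutes, while you actually carry out the verification by unwinding the definition of $\varphi^{(i)}$ and using naturality of $\varphi_{(-)}$ together with functoriality of $F=\RHom_\Lambda(T,-)$. Your preliminary remarks invoking Proposition~\ref{van-lem1} and Lemma~\ref{van-lem2} are a helpful addition, since they justify why the notation $\Hom_\Lambda(T,-)$ in the statement agrees with $\RHom_\Lambda(T,-)$ on the objects involved.
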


\begin{proof}
The first assertion is obvious. 
Moreover, one can check that the following diagram commutes :
\[
\xymatrix{
\Hom_{\bdcat{}{\Lambda}}(T,\nu_n^{-i}(T)) \times \Hom_{\bdcat{}{\Lambda}}(T,\nu_n^{-j}(T)) \ar[rr]^{\ \ \ \ \ \ \ \ \mbox{\small mult.}} \ar[d]_{\Phi_i \times \Phi_j}
& &\Hom_{\bdcat{}{\Lambda}}(T,\nu_n^{-i-j}(T)) \ar[d]^{\Phi_{i+j}}  \\
 \Hom_{\bdcat{}{\Gamma}}(\Gamma,\nu_n^{-i}(\Gamma)) \times  \Hom_{\bdcat{}{\Gamma}}(\Gamma,\nu_n^{-j}(\Gamma))  \ar[rr]_{\ \ \ \ \ \ \ \ \mbox{\small mult.}} 
 & & \Hom_{\bdcat{}{\Gamma}}(\Gamma,\nu_n^{-i-j}(\Gamma)). 
}
\]
 Thus, $\Phi$ is an isomorphism as $\mathbb{Z}$-graded algebras. 
\end{proof}

Now we are ready to finish the proof of Theorem \ref{main_thm2}.

\begin{proof}[Proof of Theorem \ref{main_thm2} (2)]
By Lemma \ref{tilde_lem3} and \ref{tilde_lem4}, we have isomorphisms
\begin{eqnarray*}
\End_{\widehat{\Lambda}}(\widehat{T}) 
\simeq \Upsilon_{\Lambda}(T)  
\simeq \Upsilon_{\Gamma}(\Gamma)  
\simeq \End_{\widehat{\Gamma}}(\widehat{\Gamma})
= \widehat{\Gamma}.
\end{eqnarray*}
of $\mathbb{Z}$-graded algebras. 
Then the second statement immediately follows from Theorem \ref{main_thm1}.
\end{proof}

As a corollary of Theorem \ref{main_thm2}, we show that $m$-APR tilting modules preserve $n$-representation tameness.  
Recall that an $n$-representation infinite algebra $\Lambda$ is called \emph{$n$-representation tame} if the center $\Cen(\widehat{\Lambda})$ of $\widehat{\Lambda}$ is noetherian and $\widehat{\Lambda}$ is finitely generated as a $\Cen(\widehat{\Lambda})$-module. 
In particular, $1$-representation tame algebras are hereditary representation tame algebras and hence $n$-representation tame algebras can be regarded as a generalization of them. 

\begin{cor}\label{pres_rep_tame}
$\Lambda$ is an $n$-representation tame if and only if so is $\Gamma$.
\end{cor}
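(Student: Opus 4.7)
My plan is to extract both parts of the $n$-representation tameness condition from the derived equivalence $\widehat{\Lambda}\sim\widehat{\Gamma}$ furnished by Theorem \ref{main_thm2}. That theorem produces a tilting $\widehat{\Lambda}$-module $\widehat{T}$ with $\widehat{\Gamma}\simeq\End_{\widehat{\Lambda}}(\widehat{T})$ as $\mathbb{Z}$-graded algebras, hence a triangle equivalence between the derived categories of $\widehat{\Lambda}$ and $\widehat{\Gamma}$. My first step is to invoke Rickard's theorem on the derived invariance of centres, yielding an algebra isomorphism $\Cen(\widehat{\Lambda})\simeq\Cen(\widehat{\Gamma})$; in particular, one centre is noetherian iff the other is. Write $Z$ for this common centre.

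Next I would transfer the finite-generation condition. Assume $Z$ is noetherian and $\widehat{\Lambda}$ is finitely generated over $Z$. From condition (T1), $\widehat{T}$ is finitely generated as a $\widehat{\Lambda}$-module, and hence also as a $Z$-module. The action of $Z=\Cen(\widehat{\Lambda})$ on $\widehat{T}$ commutes with the $\widehat{\Lambda}$-action, so the identification $\widehat{\Gamma}\simeq\End_{\widehat{\Lambda}}(\widehat{T})$ embeds $\widehat{\Gamma}$ as a $Z$-submodule of $\End_Z(\widehat{T})$. Since $\widehat{T}$ is finitely generated over the noetherian ring $Z$, so is $\End_Z(\widehat{T})$, and therefore so is its $Z$-submodule $\widehat{\Gamma}$. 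This establishes the ``only if'' direction.

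For the converse, I would appeal to the symmetric half of Miyashita's tilting theorem to regard $\widehat{T}$ as a tilting $\widehat{\Gamma}$-module (via its left $\widehat{\Gamma}=\End_{\widehat{\Lambda}}(\widehat{T})$-action) with endomorphism algebra isomorphic to $\widehat{\Lambda}^{\op}$, giving in particular that $\widehat{T}$ is finitely generated over $\widehat{\Gamma}$. Re-running the previous paragraph with the roles of $\widehat{\Lambda}$ and $\widehat{\Gamma}$ exchanged then yields the other implication. The main technical point is precisely this symmetric use of the tilting module: once finite generation of $\widehat{T}$ on both sides is secured, the remainder is a short noetherian-module argument, and the only non-trivial external inputs are Rickard's invariance of centres and Miyashita's two-sided tilting theorem, both of which apply to our $\mathbb{Z}$-graded setting.
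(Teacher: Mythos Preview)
Your proof is correct and follows essentially the same line as the paper: both deduce the result from the derived equivalence $\widehat{\Lambda}\sim\widehat{\Gamma}$ supplied by Theorem~\ref{main_thm2} together with Rickard's derived-invariance results. The paper simply cites \cite[Proposition~9.4]{Ric} as a black box, whereas you spell out the centre isomorphism and the module-finiteness transfer explicitly (embedding $\widehat{\Gamma}\hookrightarrow\End_Z(\widehat{T})$ and using Miyashita's symmetry for the converse direction).
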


\begin{proof}
By Theorem \ref{main_thm2} (2), $\widehat{\La}$ and $\widehat{\Gamma}$ are derived equivalent. 
Then the assertion follows from \cite[Proposition 9.4]{Ric}.
\end{proof}

For the case of  $m=n$, $m$-APR tilting modules have a particularly nice property as stated below.
It  can be regarded as a generalization of  \cite[Proposition II.1.4.]{BIRSc}, and will be proved by the same argument and Proposition \ref{tilde_prop1}.

\begin{prop}\label{n-pre-iso} 
Assume that $T$ is an $n$-APR tilting $\Lambda$-module. Then there exists an isomorphism 
$\widehat{\Lambda}\simeq \widehat{\Gamma}$ of algebras.
\end{prop}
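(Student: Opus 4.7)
The plan is to produce an ungraded algebra isomorphism $\widehat{\Lambda} \simeq \End_{\widehat{\Lambda}}(\widehat{T})$ via left multiplication; combined with the graded isomorphism $\widehat{\Gamma} \simeq \End_{\widehat{\Lambda}}(\widehat{T})$ from Theorem \ref{main_thm2}(2), this will give $\widehat{\Lambda} \simeq \widehat{\Gamma}$.

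First I realize $\widehat{T}$ as a two-sided ideal of $\widehat{\Lambda}$. By Proposition \ref{tilde_prop1} applied with $m = n$, the end of the resolution furnishes the short exact sequence
\[
0 \to \widehat{\tau_n^-(S)} \to \widehat{S}(1) \to S(1) \to 0
\]
in $\gr \widehat{\Lambda}$. Forgetting the grading, this identifies $\widehat{\tau_n^-(S)}$ with the right ideal $e\widehat{\Lambda}_{\geq 1}$ of $\widehat{\Lambda}$, and hence $\widehat{T} \cong I := (1-e)\widehat{\Lambda} \oplus e\widehat{\Lambda}_{\geq 1}$ as ungraded right $\widehat{\Lambda}$-modules. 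The subset $I$ is in fact a two-sided ideal: closure under left multiplication in degree zero uses that $S$ simple projective forces $e\Lambda(1-e) = 0$, while in positive degrees it is automatic from the direct-sum structure. The quotient $\widehat{\Lambda}/I$ is isomorphic to the simple module $S$.

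Next, left multiplication defines an algebra homomorphism $\varphi : \widehat{\Lambda} \to \End_{\widehat{\Lambda}}(I)$. For surjectivity, I apply $\Hom_{\widehat{\Lambda}}(-, \widehat{\Lambda})$ to $0 \to I \to \widehat{\Lambda} \to S \to 0$. By Theorem \ref{corr_RI_CY}, $\widehat{\Lambda}$ is bimodule $(n+1)$-Calabi-Yau of Gorenstein parameter $1$, so Theorem \ref{CY_condition} yields
\[
\Ext^i_{\widehat{\Lambda}}(S, \widehat{\Lambda}) \simeq D \Ext^{n+1-i}_{\widehat{\Lambda}}(\widehat{\Lambda}, S) = 0
\]
for all $i \neq n+1$, using projectivity of $\widehat{\Lambda}$ over itself. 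In particular $\Ext^1_{\widehat{\Lambda}}(S, \widehat{\Lambda}) = 0$ (since $n \geq 1$), so every right $\widehat{\Lambda}$-module map $I \to \widehat{\Lambda}$ extends to one on all of $\widehat{\Lambda}$, i.e.\ is left multiplication by some element of $\widehat{\Lambda}$. Restricting to endomorphisms $I \to I \subset \widehat{\Lambda}$ yields surjectivity of $\varphi$. Injectivity amounts to vanishing of the left annihilator of $I$ in $\widehat{\Lambda}$, which reduces to the faithfulness of left multiplication on the augmentation ideal $\widehat{\Lambda}_{\geq 1}$.

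The hard part will be this last faithfulness statement, which is the higher-dimensional analog of the combinatorial argument used in the classical setting of BIRSc Proposition II.1.4. In our situation it can be deduced from the tensor-algebra presentation $\widehat{\Lambda} = T_\Lambda(\widehat{\Lambda}_1)$ together with the $(n+1)$-Calabi-Yau duality: the multiplication maps $\widehat{\Lambda}_i \otimes_\Lambda \widehat{\Lambda}_1 \to \widehat{\Lambda}_{i+1}$ are isomorphisms by construction, and combined with the non-vanishing of $\tau_n^-(S) = e\widehat{\Lambda}_1$ this forces that no nonzero element of $\widehat{\Lambda} e$ can annihilate $e\widehat{\Lambda}_{\geq 1}$ from the left, yielding the required injectivity.
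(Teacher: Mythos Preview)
Your overall strategy --- realizing $\widehat{T}$ as the kernel of the surjection $\widehat{\Lambda}\twoheadrightarrow S$ and comparing $\End_{\widehat{\Lambda}}(\widehat{T})$ with $\widehat{\Lambda}\simeq\End_{\widehat{\Lambda}}(\widehat{\Lambda})$ via the Calabi--Yau duality --- is exactly the paper's approach, and your surjectivity argument (using $\Ext^1_{\widehat{\Lambda}}(S,\widehat{\Lambda})=0$) matches the paper's.

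The gap is in your injectivity step. Your reduction to ``no nonzero element of $\widehat{\Lambda}e$ left-annihilates $e\widehat{\Lambda}_{\geq 1}$'' is correct, but the justification you sketch is not: the fact that the multiplication maps $\widehat{\Lambda}_i\otimes_\Lambda\widehat{\Lambda}_1\to\widehat{\Lambda}_{i+1}$ are isomorphisms does \emph{not} by itself prevent a nonzero $a\in\widehat{\Lambda}_i e$ from satisfying $a\cdot e\widehat{\Lambda}_1=0$, since the tensor product is over $\Lambda$ and $a\otimes_\Lambda e\widehat{\Lambda}_1$ could already vanish. What you would need is a faithfulness statement about the left action of $\Lambda$ (and then of each $\widehat{\Lambda}_i$) on $e\widehat{\Lambda}_1$, which you have not established and which does not follow from the mere nonvanishing of $\tau_n^-(S)$.

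The paper avoids this computation entirely by using the same Calabi--Yau duality once more. You already computed $\Ext^i_{\widehat{\Lambda}}(S,\widehat{\Lambda})\simeq D\Ext^{n+1-i}_{\widehat{\Lambda}}(\widehat{\Lambda},S)=0$ for all $i\neq n+1$; apply this for $i=0$ as well as $i=1$. Then in the long exact sequence obtained from applying $\Hom_{\widehat{\Lambda}}(-,\widehat{\Lambda})$ to $0\to\widehat{T}\xrightarrow{f}\widehat{\Lambda}\to S\to 0$, the map $\Hom_{\widehat{\Lambda}}(f,\widehat{\Lambda}):\End_{\widehat{\Lambda}}(\widehat{\Lambda})\to\Hom_{\widehat{\Lambda}}(\widehat{T},\widehat{\Lambda})$ is an isomorphism. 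Combined with the isomorphism $\Hom_{\widehat{\Lambda}}(\widehat{T},f):\End_{\widehat{\Lambda}}(\widehat{T})\to\Hom_{\widehat{\Lambda}}(\widehat{T},\widehat{\Lambda})$ (from $\Hom_{\widehat{\Lambda}}(\widehat{T},S)=0$), this yields the algebra isomorphism $\End_{\widehat{\Lambda}}(\widehat{\Lambda})\simeq\End_{\widehat{\Lambda}}(\widehat{T})$ directly, with no annihilator argument needed.
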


\begin{proof} 
By $T=Q\oplus \tau^-_n(S)$ and Proposition \ref{tilde_prop1}, we have an exact sequence
\begin{eqnarray}
0 \to \widehat{T} \xrightarrow{f} \widehat{\Lambda} \to S \to 0 \label{ex_ideal}
\end{eqnarray}
in $\mod\widehat{\Lambda}$.

First, by applying the functor $\Hom_{\widehat{\Lambda}}(\widehat{T},-)$ to the exact sequence \eqref{ex_ideal}, we have the following exact sequence 
\[
0 \to \Hom_{\widehat{\Lambda}}(\widehat{T},\widehat{T}) \xrightarrow{\Hom_{\widehat{\Lambda}}(\widehat{T},f)}
 \Hom_{\widehat{\Lambda}}(\widehat{T},\widehat{\Lambda}) \xrightarrow{\hspace{8mm}} 
 \Hom_{\widehat{\Lambda}}(\widehat{T},S).
\]
By $\Hom_{\widehat{\Lambda}}(\widehat{T},S)=0$, the map $\Hom_{\widehat{\Lambda}}(\widehat{T},f)$ is an isomorphism.

Next, by applying the functor $\Hom_{\widehat{\Lambda}}(-,\widehat{\Lambda})$ to the exact sequence \eqref{ex_ideal}, we have the following exact sequence 
\[
\Hom_{\widehat{\Lambda}}(S,\widehat{\Lambda}) \xrightarrow{\hspace{8mm}} 
\Hom_{\widehat{\Lambda}}(\widehat{\Lambda},\widehat{\Lambda}) \xrightarrow{\Hom_{\widehat{\Lambda}}(f,\widehat{\Lambda})} 
\Hom_{\widehat{\Lambda}}(\widehat{T},\widehat{\Lambda}) \xrightarrow{\hspace{8mm}} 
\Ext^1_{\widehat{\Lambda}}(S,\widehat{\Lambda}).
\]
Then, by Theorem \ref{CY_condition}, we have $\Hom_{\mathcal{D}(\widehat{\Lambda})}(S,\widehat{\Lambda}[i]) \simeq D\Hom_{\mathcal{D}(\widehat{\Lambda})}(\widehat{\Lambda},S[n-i+1])=0$ for $i=0,1$. 
Thus the map $\Hom_{\widehat{\Lambda}}(f,\widehat{\Lambda})$ is an isomorphism. 
Therefore we have an isomorphism of groups  
\[
\Hom_{\widehat{\Lambda}}(\widehat{T},f)^{-1} \circ \Hom_{\widehat{\Lambda}}(f,\widehat{\Lambda}) :
\Hom_{\widehat{\Lambda}}(\widehat{\Lambda},\widehat{\Lambda}) \to \Hom_{\widehat{\Lambda}}(\widehat{T},\widehat{T}).
\]

One can easily check that this gives an isomorphism of algebras. 
\end{proof}

At the end of this paper, we give some examples of Theorem \ref{main_thm1}, \ref{main_thm2} and Proposition \ref{n-pre-iso} in the case of $n\leq 2$. In this case, we can explicitly describe quivers of relations of the algebras as below. 

\begin{ex}\label{example} \
\begin{itemize}
\item[(1)]
First we give an example for the classical case, namely the case of $n=m=1$.
Let $Q$ be the following quiver.
$$\xymatrix@C20pt@R12pt{
&4\ar[dr] & \\
3 \ar[r]\ar[ur]  &\ar[r]  2& 1}$$
We consider the path algebra $\La:=KQ$ of $Q$, which is $1$-representation infinite (cf. \cite[VII. Theorem 5.10]{ASS}), and the $1$-APR tilting $\Lambda$-module $T$ associated with vertex 1.  
Then $\Gamma:=\End_\La(T)$  is also a $1$-representation infinite algebra, which is the path algebra of the quiver obtained from $Q$ by reversing the arrows ending at the vertex 1 \cite{BGP,APR}.

It is known that the 2-preprojetive algebras $\widehat{\Lambda}$ and $\widehat{\Gamma}$ are given by the double quiver of the quiver of $\Lambda$ and $\Gamma$ with some relations respectively (see \cite{Rin}). 
Moreover $T$ induces a tilting $\widehat{\Lambda}$-module $\widehat{T}$ with $\widehat{\Gamma} \simeq \End_{\widehat{\Lambda}}(\widehat{T}) \simeq \widehat{\Lambda}$ by Theorem \ref{main_thm2} and Proposition \ref{n-pre-iso}.

These results imply the compatibility of the following diagram of quivers, 
where horizontal arrows indicate tilts of $T$ and $\widehat{T}$, respectively, and vertical arrows indicate taking $2$-preprojective algebras.

$$
\xymatrix@C20pt@R12pt{
&4\ar[dr] & &\\
3 \ar[r]\ar[ur]  &\ar[r]  2& 1&{\Longrightarrow}\\
&\ar@{=>}[d]&&\\ 
&&&\\ 
   &4\ar[dr] \ar@<1ex>[dl]& &\\
3 \ar[r]\ar[ur]  &\ar[r]\ar@<1ex>[l]  2& \ar@<1ex>[l] \ar@<1ex>[ul] 1& {\Longrightarrow}
}\ \ \ \ 
\xymatrix@C20pt@R12pt{
    &4 & &\\
3  \ar[r] \ar[ur]& 2& 1\ar[l] \ar[ul] &\\
&\ar@{=>}[d]&&\\  
&&&\\ 
  &4\ar[dr] \ar@<1ex>[dl] & &\\
3 \ar[r]\ar[ur]  &\ar[r] \ar@<1ex>[l]  2& \ar@<1ex>[l] \ar@<1ex>[ul] 1.& \\
}$$

\bigskip

\item[(2)] 
Next we give an example for the case $m=1<2=n$.
We note that the structure of $3$-CY algebras has been extensively studied (for example \cite{AIR,Bro,Boc,G,TV})  
and it is known that they have a close relationship with quivers with potentials (QPs) in the sense of  \cite{DWZ}. 

Let $Q$ be a quiver 
\[
\xymatrix@C30pt@R30pt{
4\ar@2{->}[d]^{x_3}_{y_3}    &1\ar@2{->}[l]^{x_4}_{y_4}  \\
3 \ar@2{->}[r]^{x_2}_{y_2}  &  2,\ar@2{->}[u]^{x_1}_{y_1}    }
\]
and $W:=x_1x_2x_3x_4-y_1y_2y_3y_4+x_1y_2x_3y_4-y_1x_2y_3x_4$  a \emph{potential} on $Q$ and $C:=\{x_4,y_4\}$ a \emph{cut} (see \cite{HI2} for the terminology). 
Then the \emph{truncated Jacobian algebra} $\Lambda$ of $(Q,W,C)$ is a $2$-representation infinite algebra (see \cite[section 6]{AIR}), whose quiver is the left upper one in the picture below. 
We can consider the $1$-APR tilting $\Lambda$-module $T$ associated with vertex 1. 
By Theorem \ref{main_thm1}, $\Gamma:=\End_\La(T)$ is also a $2$-representation infinite algebra.
Moreover $T$ induces a tilting $\widehat{\Lambda}$-module $\widehat{T}$ with $\widehat{\Gamma} \simeq \End_{\widehat{\Lambda}}(\widehat{T})$.

In this example, we can understand the change of quivers with relations of tilts and the $3$-preprojective algebras. 
Indeed, it is known that the quiver with relations of $\Gamma$ can be calculated by applying mutation of graded QPs \cite{M}. 
On the other hand, the $3$-preprojective algebra $\widehat{\Lambda}$ is given as the \emph{Jacobian algebra} of $(Q,W)$ (see \cite{Ke2}), and $\End_{\widehat{\Lambda}}(\widehat{T})$ is given as the Jacobian algebra of the QP obtained by mutating $(Q,W)$ \cite{BIRSm,KY}.

Therefore, we have the following diagram of quivers, 
where horizontal arrows indicate tilts of $T$ and $\widehat{T}$, respectively, and vertical arrows indicate taking $3$-preprojective algebras.

$$
\xymatrix@C30pt@R12pt{
4\ar@2{->}[d]   &1 &\\
3 \ar@2{->}[r]  &  2\ar@2{->}[u]&{\Longrightarrow}\\
\ar@{=>}[d]_{}&&\\
\\
4\ar@2{->}[d]    &1\ar@2{->}[l] & \\
3 \ar@2{->}[r]  &  2\ar@2{->}[u] & {\Longrightarrow} 
}\ \ \ \ \ \ 
\xymatrix@C30pt@R12pt{
 4 \ar@2{->}[d] \ar@2{->}[r] &1 \ar@2{->}[d]  \\
3  \ar@2{->}[r]  &  2 \\
\ar@{=>}[d]_{}&&\\
\\
 4 \ar@2{->}[d] \ar@2{->}[r] &1  \ar@2{->}[d]  \\
3  \ar@2{->}[r]  &  2.\ar@<1.5mm>[lu]\ar@<.5mm>[lu]\ar@<-.5mm>[lu]\ar@<-1.5mm>[lu]  }$$

Note that the algebra $\widehat{\La}$ arises from as a rolled-up helix on a del Pezzo surface \cite{BS} and, from a result of \cite{TV}, the QP is \emph{non-degenerate}. 
Thus, we can apply mutation of QPs repeatedly and obtain a large family of these algebras.

\item[(3)]
Finally we give an example for the case $n=m=2$.
Let $Q$ be a quiver
\[\xymatrix@C80pt@R40pt{
4 \ar@2{->}[d]^{x_3}_{y_3} \ar@2{->}[r]^(.6){x_4}_(.6){y_4} &2  \ar@2{->}[d]^{x_1}_{y_1}  \\
3  \ar@2{->}[r]^{x_2}_{y_2}  &  1\ar@<1.5mm>[lu]^(.6){r_1}\ar@<.5mm>[lu]|(.7){r_2}\ar@<-.5mm>[lu]|(.5){r_3}\ar@<-1.5mm>[lu]_(.6){r_4} 
}\]
and $W:=(x_1x_4-x_2x_3)r_1+(x_1y_4-y_2x_3)r_2+(y_1x_4-x_2y_3)r_3+(y_1y_4-y_2y_3)r_4$ a potential on $Q$ and 
$C:=\{r_1,r_2,r_3,r_4\}$ a cut.
Then the truncated Jacobian algebra $\Lambda$ of $(Q,W,C)$ is a $2$-representation infinite algebra given in \cite[Example 2.14]{HIO}, whose quiver is the left upper one in the picture below. 
We can consider the $2$-APR tilting $\Lambda$-module $T$ associated with vertex $2$.
By Theorem \ref{main_thm1}, $\Gamma:=\End_\La(T)$ is  a $2$-representation infinite algebra  (this also follows from \cite[Theorem 2.13]{HIO}).
Moreover $T$ induces a tilting $\widehat{\Lambda}$-module $\widehat{T}$ with $\widehat{\Gamma} \simeq \End_{\widehat{\Lambda}}(\widehat{T}) \simeq \widehat{\Lambda}$.

In this example, 
the quiver of $\Gamma$ can be calculated by the same argument of \cite[Theorem 3.11]{IO1}, 
and the $3$-preprojective alegbra $\widehat{\Lambda}$ is given as the Jacobian algebra of $(Q,W)$.

Thus, we have the following diagram of quivers, 
where horizontal arrows indicate tilts of $T$ and $\widehat{T}$, respectively, and vertical arrows indicate taking $3$-preprojective algebras.

$$
\xymatrix@C30pt@R12pt{
4 \ar@2{->}[d] \ar@2{->}[r] &1  \ar@2{->}[d]& \\
3  \ar@2{->}[r]  &  2  &{\Longrightarrow}   \\ 
\ar@{=>}[d]_{}&&\\
 &\\ 
4 \ar@2{->}[d] \ar@2{->}[r] &1  \ar@2{->}[d] & \\
3  \ar@2{->}[r]  &  2 \ar@<1.5mm>[lu]^(.6){}\ar@<.5mm>[lu]|(.7){}\ar@<-.5mm>[lu]|(.5){}\ar@<-1.5mm>[lu]_(.6){} &  {\Longrightarrow}  }
\ \ \ \ \ \xymatrix@C30pt@R12pt{
4 \ar@2{->}[d] \ar@2{->}[r] &1    \\
3    &  2 \ar@<1.5mm>[lu]^(.6){}\ar@<.5mm>[lu]|(.7){}\ar@<-.5mm>[lu]|(.5){}\ar@<-1.5mm>[lu]_(.6){} &  \\
 \ar@{=>}[d]_{}&\\
&\\ 
 4 \ar@2{->}[d] \ar@2{->}[r] &1  \ar@2{->}[d]  \\
3  \ar@2{->}[r] &  2 \ar@<1.5mm>[lu]^(.6){}\ar@<.5mm>[lu]|(.7){}\ar@<-.5mm>[lu]|(.5){}\ar@<-1.5mm>[lu]_(.6){}   }$$
\end{itemize}
\end{ex}


\begin{thebibliography}{ABCD}
\bibitem[Am]{Am}
 C.~Amiot, \emph{Cluster categories for algebras of global dimension 2 and quivers with potential}, 
 Ann. Inst. Fourier 59 (2009), no. 6, 2525--2590.


\bibitem[AIR]{AIR}
C.~Amiot, O.~Iyama, I.~Reiten, \emph{Stable categories of Cohen-Macaulay modules and cluster categories},  arXiv:1104.3658, to appear in  Amer. J. Math.

\bibitem[ASS]{ASS} I. Assem, D. Simson, A. Skowro\'nski,
 \textit{Elements of the representation theory of associative
   algebras. Vol. 1. Techniques of representation theory}, London
 Mathematical Society Student Texts, 65. Cambridge University Press,
 Cambridge, 2006.



\bibitem[Au]{Au}
 M.~Auslander, \emph{Functors and morphisms determined by objects}, 
Representation theory of algebras (Proc. Conf., Temple Univ., Philadelphia, Pa., 1976),  pp. 1--244. Lecture Notes in Pure Appl. Math., Vol. 37, Dekker, New York, 1978.

\bibitem[APR]{APR}
M.~Auslander, M.~I. Platzeck, I.~Reiten, \emph{Coxeter functors without
  diagrams}, Trans. Amer. Math. Soc. 250 (1979), 1--46.


\bibitem[BGL]{BGL}  D. Baer, W. Geigle, H. Lenzing, \emph{The preprojective algebra of a tame hereditary Artin algebra}, Comm. Algebra  15  (1987),  no. 1-2, 425--457.

\bibitem[BGP]{BGP}
I. N. Bernstein, I. M. Gelfand, V. A. Ponomarev,
\textit{Coxeter functors and Gabriel's theorem}, Uspehi Mat. Nauk 28 (1973), no. 2(170), 19--33.

\bibitem[Boc]{Boc} R.~Bocklandt, \emph{Graded Calabi-Yau algebras of dimension 3},
J. Pure Appl. Algebra 212 (2008), no. 1, 14--32.

\bibitem[BSW]{BSW} R. Bocklandt, T. Schedler, M. Wemyss, \emph{Superpotentials and Higher Order Derivations},  J. Pure Appl. Algebra 214 (2010), no. 9, 1501--1522. 


\bibitem[Bro]{Bro} N.~Broomhead, \emph{Dimer models and Calabi-Yau algebras}, Mem. Amer. Math. Soc.  215  (2012),  no. 1011, viii+86 pp.



\bibitem[BS]{BS} T.~Bridgeland, D.~Stern, \emph{Helices on del Pezzo surfaces and tilting Calabi-Yau algebras}, Adv. Math.  224  (2010),  no. 4, 1672--1716.



\bibitem[BIRSc]{BIRSc}
A.~B. Buan, O.~Iyama, I.~Reiten, J.~Scott, \emph{Cluster structures for
  2-{C}alabi-{Y}au categories and unipotent groups}, Compos. Math. 145
  (2009), 1035--1079.

\bibitem[BIRSm]{BIRSm}
A.~B. Buan, O.~Iyama, I.~Reiten, D.~Smith, \emph{Mutation of cluster-tilting objects and potentials}, Amer. J. Math. 133 (2011), no. 4, 835--887.



\bibitem[DWZ]{DWZ}
H.~Derksen, J.~Weyman, A.~Zelevinsky, \emph{Quivers with potentials and their representations. {I}. {M}utations}, Selecta Math. (N.S.) 14
  (2008), no.~1, 59--119.


\bibitem[GP]{GP} I. M. Gelfand, V. A. Ponomarev, \emph{Model algebras and representations of graphs}, (Russian) Funktsional. Anal. i Prilozhen.  13  (1979), no. 3, 1--12.

\bibitem[G]{G} V. Ginzburg, \emph{Calabi-Yau algebras}, arXiv:math/0612139.



\bibitem[H]{H} D. Happel, \emph{Triangulated categories in the representation theory of finite-dimensional algebras},
London Mathematical Society Lecture Note Series, 119. Cambridge University Press, Cambridge, 1988.


  
\bibitem[HI1]{HI1} M. Herschend, O. Iyama, \textit{$n$-representation-finite algebras and twisted fractionally Calabi-Yau algebras}, 
Bull. Lond. Math. Soc. 43 (2011), no. 3, 449--466.

\bibitem[HI2]{HI2} M. Herschend, O. Iyama, \textit{Selfinjective quivers with potential and 2-representation-finite algebras},
Compos. Math. 147 (2011), no. 6, 1885--1920.

\bibitem[HIO]{HIO} M. Herschend, O. Iyama, S. Oppermann, \textit{n-representation infinite algebras}, 
Adv. Math. 252 (2014), 292--342.



\bibitem[I1]{I1} O. Iyama, \emph{Higher-dimensional Auslander-Reiten theory on maximal orthogonal subcategories},
Adv. Math. 210 (2007), no. 1, 22--50.

\bibitem[I2]{I2} O. Iyama, \emph{Auslander correspondence},
Adv. Math. 210 (2007), no. 1, 51--82.


\bibitem[I3]{I3} O. Iyama, \emph{Cluster tilting for higher Auslander algebras},
Adv. Math. 226 (2011), no. 1, 1--61.


\bibitem[IO1]{IO1} O. Iyama, S. Oppermann, \emph{$n$-representation-finite algebras and $n$-APR tilting}, Trans. Amer. Math. Soc. 363 (2011), no. 12, 6575--6614.

\bibitem[IO2]{IO2} O. Iyama, S. Oppermann, \emph{Stable categories of higher preprojective algebras}, Adv. Math. 244 (2013), 23--68. 


\bibitem[IR]{IR} O. Iyama, I. Reiten, \emph{Fomin-Zelevinsky mutation and tilting modules over Calabi-Yau algebras}, Amer. J. Math. 130 (2008), no. 4, 1087--1149.


\bibitem[IYa]{IYa} O. Iyama, D. Yang, \emph{Silting reduction and Calabi--Yau reduction of triangulated categories}, arXiv:1408.267. 

\bibitem[IYo]{IYo} O. Iyama, Y. Yoshino, \emph{Mutation in triangulated categories and rigid Cohen-Macaulay modules}, Invent. Math. 172 (2008), no. 1, 117--168.



\bibitem[Ke1]{Ke1} B. Keller, \emph{Calabi-Yau triangulated categories},
Trends in representation theory of algebras and related topics, 467--489, EMS Ser. Congr. Rep., Eur. Math. Soc., Zurich, 2008.

\bibitem[Ke2]{Ke2} B. Keller, \emph{Deformed Calabi-Yau completions. With an appendix by Michel Van den Bergh}, J. Reine Angew. Math. 654 (2011), 125--180.


\bibitem[Ke3]{Ke3} B. Keller, \emph{Cluster algebras and derived categories}, Proceedings of the GCOE conference, Derived categories 2011 Tokyo.

\bibitem[KR1]{KR1} B. Keller, I. Reiten, \emph{Cluster tilted algebras are Gorenstein and stably Calabi-Yau}, Adv. Math. 211 (2007), 123--151. 


\bibitem[KR2]{KR2} B. Keller, I. Reiten, \emph{Acyclic Calabi-Yau categories are cluster categories}, Compositio Mathematica,  2008, 1332--1348. 



\bibitem[KY]{KY} B. Keller, D. Yang, \emph{Derived equivalences from mutations of quivers with potential},
Adv. Math. 226 (2011), no. 3, 2118--2168.

\bibitem[M]{M} Y. Mizuno, \emph{APR tilting modules and graded quivers with potential}, Int. Math. Res. Not. 2014 (3): 817--841.


\bibitem[MM]{MM} Y. Minamoto, I. Mori, \emph{Structures of AS-regular algebras}, Adv. Math. 226 (2011) 4061--4095.

\bibitem[Ric]{Ric} J. Rickard, \textit{Morita theory for derived categories}, J. London Math. Soc. (2) 39 (1989), no. 3, 436--456.


\bibitem[Rin]{Rin}C. M. Ringel, \emph{The preprojective algebra of a quiver.  Algebras and modules, II },  467--480, CMS Conf. Proc., 24, Amer. Math. Soc., Providence, RI, 1998.


\bibitem[TV]{TV} L. de Thanhoffer de V{\"o}lcsey, M. Van den Bergh, \emph{Some new examples of non-degenerate quiver potentials}, Int. Math. Res. Not. IMRN  2013,  no. 20, 4672--4686. 


\bibitem[Vdb]{Vdb} M. Van den Bergh, \emph{Calabi-Yau algebras and superpotentials}, to appear in  Selecta Mathematica.




\end{thebibliography}
\end{document}